\theoremstyle{plain}
\newtheorem{theorem}{Theorem}
\newtheorem{proposition}[theorem]{Proposition}
\newtheorem{lemma}[theorem]{Lemma}
\newtheorem{corollary}[theorem]{Corollary}
\newcommand{\ttt}{ \theta }
\newcommand{\eee}{ \epsilon }
\newcommand{\rere}{\Re}
\title{Extending Landau's Theorem on Dirichlet Series with Non-Negative Coefficients}
\author{Brian N. Maurizi}
\date{} 
\begin{document}

\maketitle

\begin{abstract}

A classical theorem of Landau states that, if an ordinary Dirichlet series has non-negative coefficients, then it has a singularity on the real line at its abscissae of absolute convergence.  In this article, we relax the condition on the coefficients while still arriving at the same conclusion.  Specifically, we write $a_n$ as $|a_n| e^{i \ttt_n}$ and we consider the sequences $\{ \; |a_n| \; \}$ and $\{ \; \cos{\ttt_n} \; \}$.  Let $M \in \mathbb{N}$ be given.  The condition on $\{ \; |a_n| \; \}$ is that, dividing the sequence sequentially into vectors of length $M$, each vector lies in a certain convex cone $B \subset [0,\infty)^M$.   The condition on $\{ \; \cos{\ttt_n} \; \}$ is (roughly) that, again dividing the sequence sequentially into vectors of length $M$, each vector lies in the negative of the polar cone of $B$.  We attempt to quantify the additional freedom allowed in choosing the $\ttt_n$, compared to Landau's theorem.  We also obtain sharpness results.

\end{abstract}

\section{Introduction}

A (ordinary) Dirichlet series is a function of the following form, with $a_n \in \mathbb{C}$:
\begin{equation}\label{DirSer}
f(s) = \sum_{n=1}^{\infty} a_n n^{-s} \qquad s \in \mathbb{C} 
\end{equation}
For $s = \sigma+it \in \mathbb{C}$, we denote the real part of $s$ by $\rere{s}$.  The standard region on which a Dirichlet series might be expected to converge is a right half plane, we denote these by
$$
\Omega_{\sigma} = \{ s \in \mathbb{C} : \rere{s} > \sigma \}
$$
and its closure will be written $\overline{\Omega}_{\sigma}$.  Unlike a power series, a Dirichlet series can converge in an open region without converging absolutely anywhere in that region, for example.  A Dirichlet series has several different ``regions of convergence'' $\Omega_{\sigma}$, with several different abscissae $\sigma$ accordingly.  The abscissae most often considered are:
\begin{align}
\sigma_a &= \inf \{ \sigma : \sum a_n n^{-s} \text{ converges absolutely for } s \in \Omega_\sigma \}  \nonumber \\
\sigma_u &= \inf \{ \sigma : \sum a_n n^{-s} \text{ converges uniformly on } \Omega_{\sigma}  \}  \nonumber \\
\sigma_b &= \inf \{ \sigma : \sum a_n n^{-s} \text{ converges to a bounded function on } \Omega_\sigma \}  \nonumber \\
\sigma_c &= \inf \{ \sigma : \sum a_n n^{-s} \text{ converges for all } s \in \Omega_\sigma \}  \nonumber
\end{align}
From the definitions, it is evident that $\sigma_c \le \sigma_b \le \sigma_u \le \sigma_a$.  It is also a basic result that the function $f$ defined by (\ref{DirSer}) is holomorphic on the open region $\Omega_{\sigma_c}$.  Further relations among these abscissae, the coefficients $\{a_n\}$, and the function $f$ are of considerable interest.  Some of the standard results are the following:
\begin{itemize}
\item{ $\sigma_a - \sigma_c \le 1$ (a basic result), and this is sharp (ex. the alternating zeta function $\sum (-1)^{n+1} n^{-s}$)}
\item{ $\sigma_u = \sigma_b$ (\cite{Bohr_1913_1}), henceforth we will denote this abscissa by $\sigma_b$ }
\item{ $\sigma_a - \sigma_b \le 1/2$ (\cite{Bohr_1913_2}), and this is sharp (\cite{Hille_Bohnenblust}) }
\end{itemize}
For other standard results in analytic number theory and Dirichlet series, we refer the interested reader to \cite{apostol}.

There has been recent interest in applying tools from modern analysis to Dirichlet series (see the survey of Hedenmalm \cite{hedenmalm_survey}).  A short list (non-exhaustive in both topics and articles within those topics) includes the interpolation problem within Hilbert spaces of Dirichlet series (\cite{olsen_seip_interp}), the multiplier algebras of Hilbert spaces of Dirichlet series (\cite{hls}, \cite{mccarthy_03}), Carleson-type theorems for Dirichlet series (\cite{hedenmalm_saksman}, \cite{bayart_konyagin_queffelec_04}), and composition operators on spaces of Dirichlet series (\cite{bayart_et_al_2008}).

We mention the above results for contrast, because our result will be ``classic'' in both statement and proof, and we will investigate Dirichlet series which (among other things) satisfy
\begin{equation}\label{E:sigma_c_sigma_a}
\sigma_a = \sigma_c
\end{equation}
Specifically, we are interested in extending the following theorem of Landau (we will find it convenient to translate and assume $\sigma_a = 0$ for all functions we consider):

\begin{theorem}[E. Landau \cite{Landau_Handbuch}]\label{thm_Landau}

Suppose that $f(s)= \sum a_n n^{-s}$ has abscissa of absolute convergence equal to $0$.  If $a_n \in \mathbb{R}, a_n \ge 0$ for all $n$ then $f$ does not extend holomorphically to a neighborhood of $s = 0$.

\end{theorem}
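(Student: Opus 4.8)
The plan is to prove Landau's theorem by a standard Taylor-expansion-around-a-real-point argument. Suppose, for contradiction, that $f$ extends holomorphically to a neighborhood of $s=0$. Since $f$ is holomorphic on $\Omega_0$ and at $0$, it is holomorphic on an open set containing $\overline{\Omega}_0 \cap \{|s| < \delta\}$ for some $\delta > 0$; in particular $f$ is holomorphic in some disc $D$ centered at a real point $s_0 > 0$ with radius $R > s_0$. The idea is that the Taylor series of $f$ about $s_0$ then has radius of convergence at least $R$, and we will derive convergence of $\sum a_n n^{-s}$ at a negative real point, contradicting $\sigma_a = \sigma_c = 0$.

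First I would compute the Taylor coefficients of $f$ about $s_0$. Since $f$ is holomorphic on $\Omega_0$ we may differentiate the Dirichlet series term by term, giving
\begin{equation}\label{E:derivs}
f^{(k)}(s_0) = \sum_{n=1}^{\infty} a_n (-\log n)^k n^{-s_0}.
\end{equation}
Then for a real $s = -\eee$ with $0 < \eee < R - s_0$, the Taylor expansion gives
\begin{equation}\label{E:taylor}
f(-\eee) = \sum_{k=0}^{\infty} \frac{f^{(k)}(s_0)}{k!}(-\eee - s_0)^k = \sum_{k=0}^{\infty} \frac{(s_0+\eee)^k}{k!}\sum_{n=1}^{\infty} a_n (\log n)^k n^{-s_0},
\end{equation}
where in the last step I used $(-\eee - s_0)^k = (-1)^k (s_0+\eee)^k$ to cancel the $(-1)^k$ from \eqref{E:derivs}. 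Here is where non-negativity enters: because $a_n \ge 0$ and $(\log n)^k \ge 0$, every term in the double sum \eqref{E:taylor} is non-negative, so I may interchange the order of summation freely (Tonelli). Summing over $k$ first yields $\sum_n a_n n^{-s_0} e^{(s_0+\eee)\log n} = \sum_n a_n n^{\eee}$, so $\sum_n a_n n^{\eee}$ converges. But $a_n \ge 0$ means $\sum a_n n^{\eee} = \sum |a_n| n^{\eee} = \sum |a_n| |n^{-(-\eee)}|$, so the Dirichlet series converges absolutely at $s = -\eee < 0$, contradicting $\sigma_a = 0$.

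The main obstacle — really the only delicate point — is justifying the manipulations: that term-by-term differentiation of the Dirichlet series is valid on $\Omega_0$ (standard, since the series converges locally uniformly there), that the Taylor series of $f$ about $s_0$ actually represents $f$ on the real interval reaching past $0$ (this needs the holomorphic extension hypothesis to guarantee the radius of convergence exceeds $s_0$, which is where the assumed neighborhood of $0$ is used, together with compactness to get a uniform $\delta$), and the Fubini/Tonelli interchange (clean because all terms are non-negative once the signs are arranged as above). I would present these as short lemmas or inline remarks rather than belaboring them. The takeaway structure to emphasize — since the paper will generalize it — is: (i) holomorphic extension past $0$ forces a Taylor radius past $0$ at a real base point $s_0>0$; (ii) plugging in a negative real point and expanding gives a double series; (iii) the sign pattern of the coefficients $a_n$ combined with the sign pattern of $(\log n)^k$ makes the double series a series of non-negative (or at least sign-controlled) terms, enabling rearrangement into $\sum a_n n^{\eee}$; (iv) this contradicts the definition of $\sigma_a$. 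In the generalization the vectors-in-a-cone hypotheses will play the role of step (iii), controlling the sign of grouped blocks of $a_n(\log n)^k$ terms rather than individual ones.
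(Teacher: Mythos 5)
Your proposal is correct and follows essentially the same route as the paper: expand $f$ in a Taylor series about a positive real point, use the assumed holomorphic extension to push the radius of convergence past $0$, and use $a_n\ge 0$ to justify (via Tonelli) rearranging the double series into $\sum_n a_n n^{\eee}<\infty$, contradicting $\sigma_a=0$. The only cosmetic difference is that you obtain the Taylor coefficients by termwise differentiation of the Dirichlet series, while the paper gets them by rearranging an absolutely convergent double series; both justifications are standard and equivalent here.
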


Logically, the property that must account for the situation $\sigma_c < \sigma_a$ is cancellation among the coefficients $\{a_n\}$.  Therefore, once we strictly limit cancellation among the $\{a_n\}$, (\ref{E:sigma_c_sigma_a}) should follow.  A straightforward way to do this is to require $a_n \ge 0$, and the above theorem confirms this (note that the absence of a holomorphic extension about $s=0$ is stronger than (\ref{E:sigma_c_sigma_a}) ).

It is a natural question to ask whether we could impose less strict conditions on the $\{a_n\}$ and still arrive at the same conclusion.  One would expect that our freedom in choosing the coefficients $\{a_n\}$ will be substantially limited, but can these limitations be quantified in some sense?  Our purpose in this article is to explore these questions.

We wish to mention that there are many interesting conclusions which follow from the assumption ``$a_n \ge 0$,'' the above theorem being but one.  We recall a few of them here.  Define $A_N = \sum_{n=1}^N a_n$.

Suppose $a_n \ge 0$, $\sum a_n = \infty$, and $\sum a_n n^{-s}$ converges for $s \in \Omega_0$ (together, these imply $\sigma_a = 0$).  For arbitrary $t_n \in \mathbb{C}$, consider $\sum a_n t_n n^{-s}$.  To conclude that $\sum a_n t_n n^{-s}$ converges in $\Omega_0$, by a basic result applicable to any Dirichlet series it suffices to assume that $\sum a_n t_n$ converges.  In \cite{Borwein_87}, this is improved in this specific situation; he proves it suffices to show that
$$
(1/A_N) \sum_{n=1}^N a_n t_n
$$
converges as $N \rightarrow \infty$.

Suppose $a_n \ge 0$ and $f(s) = \sum a_n n^{-s}$ has $\sigma_a=0$.  This implies $f$ is log-convex on $(0,\infty)$ (this is due to the log-convexity of each term $n^{-s}$, see the discussion in \cite{Cerone_Dragomir_09} ).  It is also proved in \cite{Kolyada_Leindler_98} that $\| f \|_{L^p(0,\infty)}$ can be estimated above and below by a weighted $l^p$ norm of (modified) dyadic blocks of the $\{a_n\}$, and that $\|f\|_{BMO(0,\infty)}$ can be estimated above and below by another ``dyadic block''-type quantity involving the $\{a_n\}$.

We will obtain an extension of the theorem of Landau, it is an interesting question whether there is perhaps a common thread among more than one of the results mentioned above that would extend the requirement ``$a_n \ge 0$.''

Let us write $a_n = |a_n| e^{i \ttt_n}$.  In section \ref{S:examine_proof} we examine the proof of Landau's theorem, and one notes that the proof can be extended in a straightforward way to obtain 

\begin{theorem}[Landau's Theorem, First Extension]\label{thm_landau_firstextension}

Suppose that $f(s)= \sum a_n n^{-s}$ has abscissa of absolute convergence equal to $0$.  If there exists $\gamma > 0 $ such that $\cos(\ttt_n) \ge \gamma$ for all $n$ then $f$ does not extend holomorphically to a neighborhood of $s =0$.

\end{theorem}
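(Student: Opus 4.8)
The plan is to follow the classical proof of Theorem~\ref{thm_Landau} essentially line by line, inserting a single extra step---passing to real parts---at the point where positivity of the coefficients is used. Suppose, for contradiction, that $f$ extends holomorphically to a neighborhood $\mathcal N$ of $s=0$. Since $\sigma_a=0$, $f$ is holomorphic on $\Omega_0$, hence on the open set $\Omega_0\cup\mathcal N$, which contains the compact disk $\overline{D(1,1)}$ (indeed $\overline{D(1,1)}\setminus\{0\}\subset\Omega_0$ and $0\in\mathcal N$). Therefore $f$ is holomorphic on $D(1,1+\delta)$ for some $\delta>0$, so its Taylor expansion about $s=1$ converges throughout that disk, and in particular at the real point $\sigma_0=-\delta/2$:
\[
f(\sigma_0)=\sum_{k=0}^{\infty}\frac{(-1)^k f^{(k)}(1)}{k!}\,\bigl(1+\tfrac{\delta}{2}\bigr)^k .
\]

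Next I would compute the Taylor coefficients. A Dirichlet series may be differentiated term by term inside its half-plane of convergence, so $(-1)^k f^{(k)}(1)=\sum_{n}a_n(\log n)^k n^{-1}$, and this series converges absolutely because $(\log n)^k=o(n^{1/2})$ while $\sum_n|a_n|n^{-1/2}<\infty$. Writing $a_n=|a_n|e^{i\ttt_n}$ and applying the continuous, $\mathbb R$-linear map $\rere$ termwise to both series above gives
\[
\rere f(\sigma_0)=\sum_{k=0}^{\infty}\frac{(1+\delta/2)^k}{k!}\sum_{n=1}^{\infty}|a_n|\cos(\ttt_n)\,(\log n)^k\,n^{-1}.
\]
This is the step that uses the hypothesis: every summand of this double series is non-negative, since $\cos(\ttt_n)\ge\gamma>0$, $|a_n|\ge 0$, $(\log n)^k\ge 0$ and $n^{-1}>0$.

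Finally, since all terms are non-negative and one iterated sum already equals the finite number $\rere f(\sigma_0)$, Tonelli's theorem permits interchanging the two summations; summing the exponential series $\sum_k\bigl((1+\delta/2)\log n\bigr)^k/k!=n^{1+\delta/2}$ then yields
\[
\sum_{n=1}^{\infty}|a_n|\cos(\ttt_n)\,n^{\delta/2}=\rere f(\sigma_0)<\infty .
\]
Using $\cos(\ttt_n)\ge\gamma$ once more, $\sum_n|a_n|n^{\delta/2}\le\gamma^{-1}\rere f(\sigma_0)<\infty$, so $\sum a_n n^{-s}$ converges absolutely at $s=-\delta/2$, forcing $\sigma_a\le-\delta/2<0$---a contradiction. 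The only points needing care are the termwise differentiation of the Dirichlet series (standard, from locally uniform convergence on $\Omega_0$) and the interchange of the two sums; I expect the latter to be the main thing to get right, and it works precisely because taking real parts first turned every summand non-negative. Note that the argument never requires $\sum|a_n|n^{-s}$ itself to extend beyond $\Omega_0$---only that $f$ does.
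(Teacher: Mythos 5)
Your proof is correct, and it rests on exactly the same mechanism as the paper's: passing to real parts and using $\Re a_n = |a_n|\cos(\theta_n) \ge \gamma |a_n|$ so that the classical Landau rearrangement argument (Taylor expansion about a real point to the right of $0$, termwise differentiation, interchange of the two sums to get absolute convergence at a negative real point) goes through. The only differences are organizational: you expand about $s=1$ and apply Tonelli directly to the non-negative real-part double series, whereas the paper expands about a small $\eee$ and routes the identical estimate through its re-formulated key inequality (Theorem \ref{thm_keyineq}) with $C_{\eee}=1/\gamma$.
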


We will develop conditions on the $\{a_n\}$ which are expressed as certain restrictions on the sequence $|a_n|$, and related restrictions on the sequence $\{\cos(\ttt_n)\}$.  We will see that as the restrictions on $|a_n|$ are relaxed, the restrictions on $\cos(\ttt_n)$ become more strict.  The above theorem falls on one end of this spectrum, with no requirements on $|a_n|$ and strict requirements on $\cos(\ttt_n)$.

For $\rho \in (0,\infty)$, let us define $B^{\rho} \subset [0,\infty)^M$ by
\begin{equation}\label{Brho}
B^{\rho} = B^{\rho,M} = \left\{ \beta = (\beta_1 , \ldots,  \beta_M) \in [0,\infty)^M \;  : \beta_1 \le \rho \beta_2 \le \rho^2 \beta_3 \le \cdots \le \rho^{M-1} \beta_M    \right\}
\end{equation}
Note that ``$B^0 = \{ (0,0, \ldots , 0, \mathbb{R}_{\ge0}) \}$'', and ``$B^{\infty} = [0,\infty)^M$''; as $\rho$ proceeds from $0 \rightarrow \infty$, $B^{\rho}$ grows to fill $[0,\infty)^M$.

The standard inner product in Euclidean space will be denoted $x \cdot y$.  We denote the polar cone of a convex cone $C \subset \mathbb{R}^M$ by
$$
C^{\sharp} = \{ x \in \mathbb{R}^M : x \cdot c \le 0 \;\; \forall c \in C\}
$$
We obtain the following result:

\begin{theorem}\label{T}

Suppose that $f(s)= \sum a_n n^{-s}$ has abscissa of absolute convergence equal to $0$.  Write $a_n = |a_n| e^{i \ttt_n}$, and fix $M \in \mathbb{N}$.  Suppose that there exists $ \rho > 0$ and $\gamma >0$ such that, for all $l = 0,1, \ldots$, we have
\begin{align}
( |a_{Ml+1}|,   |a_{Ml+2}|, \ldots,  |a_{Ml+M}| )  &\in B^{\rho}  \label{E:eta_in_Brho} \\
( \cos(\ttt_{Ml+1}), \cos(\ttt_{Ml+2}), \ldots, \cos(\ttt_{Ml+M}) \; )  &\in -\left( B^{\rho} \right)^{\sharp} + \gamma (1,1, \ldots , 1) \label{E:psi_in_Brhosharp}
\end{align}
Then $f$ does not have a holomorphic extension to a neighborhood of $s=0$.

\end{theorem}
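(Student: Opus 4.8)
The plan is to adapt the proof of Landau's theorem (reviewed in Section~\ref{S:examine_proof}) to a real Dirichlet series extracted from $f$, and to use \eqref{E:eta_in_Brho}--\eqref{E:psi_in_Brhosharp} to keep track of the signs of the coefficients block by block.

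\textbf{Step 1: reduction to real coefficients.} Suppose, for contradiction, that $f$ extends holomorphically to a neighbourhood of $s=0$. Since $\sigma_a=0$, $f$ is holomorphic on $\Omega_0$, hence $f$ is holomorphic on an open set $U\supseteq\Omega_0$ that is symmetric about the real axis and contains a disc $D(0,r)$. The function $s\mapsto\overline{f(\bar s)}$ is then also holomorphic on $U$, so $h(s):=\tfrac12\big(f(s)+\overline{f(\bar s)}\big)$ is holomorphic on $U$; on $\Omega_0$, where $\sum a_n n^{-s}$ converges absolutely, $h$ and $\sum_n b_n n^{-s}$ agree on $(0,\infty)$, hence on $\Omega_0$, where $b_n:=|a_n|\cos\ttt_n\in\mathbb{R}$. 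So it suffices to contradict the hypothesis that $h$ is holomorphic at $0$.

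\textbf{Step 2: the Taylor expansion (where $b_n\ge 0$ would be used).} Fix $\sigma_0\in(0,r)$. Since $h$ is holomorphic on $\Omega_0$ together with $D(0,r)$, the radius of convergence of the Taylor series of $h$ about $\sigma_0$ is at least $\operatorname{dist}(\sigma_0,\mathbb{C}\setminus U)\ge\sqrt{\sigma_0^2+r^2}>\sigma_0$, so for some $\delta\in(0,1)$ the point $-\delta$ lies inside it. Using $h^{(k)}(\sigma_0)=\sum_n b_n(-\log n)^k n^{-\sigma_0}$ (valid since $\sigma_0>0$) and interchanging the finite $k$-sum with the absolutely convergent $n$-sum, the partial sums of the Taylor series at $-\delta$ become $\sum_n b_n\phi_K(n)$, where
\[
\phi_K(n):=n^{-\sigma_0}P_K(c\log n),\qquad P_K(x):=\sum_{k=0}^{K}\frac{x^k}{k!},\qquad c:=\sigma_0+\delta,
\]
and hence $h(-\delta)=\lim_{K\to\infty}\sum_n b_n\phi_K(n)$ (each $n$-sum converges absolutely for fixed $K$, the weight being $O_K\big((\log n)^K n^{-\sigma_0}\big)$). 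In Landau's setting $b_n\ge 0$, so one may pass $K\to\infty$ inside the sum to get $\sum_n b_n n^{\delta}=+\infty$, contradicting $\sigma_a=0$; here the $b_n$ change sign and this is illegitimate, so the block structure must enter.

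\textbf{Step 3: block-constant weights --- the main difficulty.} The obstacle is that $\phi_K$ is \emph{not} monotone in $n$, so the vectors $\big(|a_{Ml+j}|\,\phi_K(Ml+j)\big)_{j=1}^{M}$ need not lie in $B^\rho$ and \eqref{E:psi_in_Brhosharp} cannot be applied to them. I would circumvent this by replacing $\phi_K$ by its block-constant version $\tilde\phi_K(n):=\phi_K(\nu(n))$, where $\nu(n)=M\big(\lfloor(n-1)/M\rfloor+1\big)$ is the largest index in the length-$M$ block containing $n$ (so $n\le\nu(n)<n+M$). Writing $\eta^{(l)}=(|a_{Ml+1}|,\dots,|a_{Ml+M}|)$ and $\psi^{(l)}=(\cos\ttt_{Ml+1},\dots,\cos\ttt_{Ml+M})$, condition \eqref{E:psi_in_Brhosharp} says precisely that $\psi^{(l)}\cdot\beta\ge\gamma\,(1,\dots,1)\cdot\beta$ for all $\beta\in B^\rho$; applying this with $\beta=\phi_K\big(M(l+1)\big)\,\eta^{(l)}\in B^\rho$ (which is in $B^\rho$ by \eqref{E:eta_in_Brho} and homogeneity of the cone) gives, block by block,
\[
\sum_{j=1}^{M}b_{Ml+j}\,\tilde\phi_K(Ml+j)=\phi_K\!\big(M(l+1)\big)\big(\psi^{(l)}\!\cdot\eta^{(l)}\big)\ \ge\ \gamma\,\phi_K\!\big(M(l+1)\big)\sum_{j=1}^{M}|a_{Ml+j}|\ \ge\ 0 .
\]
Summing over $l$ (the regrouping is valid since $\sum_n|a_n|\tilde\phi_K(n)<\infty$ for fixed $K$) yields $\sum_n b_n\tilde\phi_K(n)\ge\gamma\sum_n|a_n|\tilde\phi_K(n)$. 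As $\phi_K(\nu(n))\uparrow\nu(n)^{\delta}\ge n^{\delta}$ when $K\to\infty$, monotone convergence and $\sigma_a=0$ give $\sum_n|a_n|\tilde\phi_K(n)\to+\infty$.

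\textbf{Step 4: the error term and conclusion.} It remains to show $\sum_n b_n\big(\phi_K(n)-\tilde\phi_K(n)\big)$ is bounded uniformly in $K$. Since each block has the fixed length $M$, the estimate $\big|(\log\phi_K)'(t)\big|=\big|\tfrac1t\big(-\sigma_0+c\,P_{K-1}(c\log t)/P_K(c\log t)\big)\big|\le\max(\sigma_0,\delta)/t$ gives $|\phi_K(n)-\phi_K(\nu(n))|\le CM\,n^{-1}\phi_K(n)\le CM\,n^{\delta-1}$ for $n\ge n_0$ (and, for the finitely many $n<n_0$, the $K$-free bound $\phi_K(n)\le n^{\delta}$ applies). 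Hence
\[
\Big|\sum_n b_n\big(\phi_K(n)-\tilde\phi_K(n)\big)\Big|\ \le\ CM\sum_n|a_n|\,n^{\delta-1}+O(1),
\]
which is \emph{finite and independent of $K$}, precisely because $\delta<1$ (so $\sum_n|a_n|n^{\delta-1}<\infty$). Therefore $\sum_n b_n\phi_K(n)\ge\gamma\sum_n|a_n|\tilde\phi_K(n)-O(1)\to+\infty$, contradicting $\sum_n b_n\phi_K(n)\to h(-\delta)\in\mathbb{C}$. Thus $f$ has no holomorphic extension to a neighbourhood of $0$. (Taking $M=1$ makes \eqref{E:eta_in_Brho} vacuous and turns \eqref{E:psi_in_Brhosharp} into $\cos\ttt_n\ge\gamma$, so this specialises to Theorem~\ref{thm_landau_firstextension}; the only ingredient beyond that case and Landau's original argument is the passage to block-constant weights, cheap exactly because the blocks have bounded length.)
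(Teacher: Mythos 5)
Your proposal is correct, and it proves Theorem \ref{T} by a genuinely different route from the paper. The paper stays inside the reformulation of Theorem \ref{thm_keyineq}: it must control $\sum_n|a_n|n^{-\eee}(\log n)^k$ for every $k$ separately, so within each block the relevant weight vector $\beta^{(k,l)}_j=|a_{Ml+j}|(\log(Ml+j))^k$ depends on $k$, and the hypothesis that all of these lie in the cone $B^{\rho}$, dualized against \eqref{E:psi_in_Brhosharp} via the polar cone, is exactly what yields \eqref{E:ineq_no_epsilons} uniformly in $k$; the factor $(Ml+j)^{-\eee}$ is handled by a block-wise Taylor expansion. You instead sum the Taylor tail in $k$ first, pass to the real series $\sum(\rere a_n)n^{-s}$, and replace the resulting weights $\phi_K(n)$ by block-constant ones, pushing the discrepancy into an error term bounded uniformly in $K$ because the blocks have bounded length and $\delta<1$ (so $\sum_n|a_n|n^{\delta-1}<\infty$); this single step plays the role of the paper's Taylor expansion of $(Ml+j)^{-\eee}$ but absorbs the $(\log n)^k$-dependence as well. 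A notable consequence: once the weights are block-constant you invoke \eqref{E:eta_in_Brho}--\eqref{E:psi_in_Brhosharp} only through the scalar inequality $\sum_{j=1}^M|a_{Ml+j}|\cos\theta_{Ml+j}\ge\gamma\sum_{j=1}^M|a_{Ml+j}|$ per block, so your argument actually proves a formally stronger theorem in which \eqref{E:eta_in_Brho} is not needed and \eqref{E:psi_in_Brhosharp} is weakened to that block-wise condition; this is consistent with Proposition \ref{P:sharpness}, whose counterexamples have $\sum_j\rho^{-j}\cos_j=0$ and hence fail the scalar condition for every $\gamma>0$. What the paper's route buys is the cone/polar-cone formulation itself, which is the structure its volume estimates and sharpness results quantify; what your route buys is a cleaner limiting argument (one real Dirichlet series, one passage $K\to\infty$) and a weaker effective hypothesis. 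The only details worth writing out fully are the ones you already flag: the choice of $\sigma_0,\delta$ with $\delta<1$ and $-\delta$ strictly inside the disc where the Taylor series represents $h$, and the mean-value estimate on $[n,\nu(n)]$ giving $|\phi_K(n)-\phi_K(\nu(n))|\le CMn^{\delta-1}$ uniformly in $K$.
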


Some comments on this result.  First, note that condition (\ref{E:eta_in_Brho}) is not a ``global'' growth or decay condition; with $M=2$ it is satisfied by $\rho, 1, \rho , 1 , \ldots$.  Second, noting that
$$
C_1 \subset C_2 \implies C_2^{\sharp} \subset C_1^{\sharp}
$$
we see that if $\rho$ increases then $B^{\rho}$ becomes larger and therefore $\left( B^{\rho} \right)^{\sharp} $ becomes smaller.  In this sense, (\ref{E:eta_in_Brho}) and (\ref{E:psi_in_Brhosharp}) are ``dual'' to one another; the amount of restriction on $|a_n|$ is inversely proportional to the restriction on $\cos(\ttt_n)$.

In theorem \ref{thm_landau_firstextension}, we saw that with no restrictions on the $|a_n|$ we are free to choose $\ttt_n$ with $\cos(\ttt_n) \in [\gamma,1]$; taking a group of $M$ terms, we are free to choose
$$
( \cos(\ttt_{Ml+1}), \cos(\ttt_{Ml+2}), \ldots, \cos(\ttt_{Ml+M}) \; ) \in [\gamma,1]^M
$$
i.e. the $M$-dimensional volume of the set of admissable values of cosines is less than or equal to $1^M=1$.

In theorem \ref{T}, we have placed restrictions on the sequence $|a_n|$.  Therefore, theorem \ref{T} is only interesting if we can considerably increase the freedom in choosing $\ttt_n$, beyond the amount in theorem \ref{thm_landau_firstextension}.  The following volume estimates demonstrate that this is the case.  We need to consider $\rho \ge 1$ and $\rho <1$ separately, because different constraints will bind in the formation of the set $-\left( B^{\rho} \right)^{\sharp} \cap [-1,1]^M$.

\begin{proposition}\label{P:volume_estimate}
\begin{align}
Vol^{\mathbb{R}^M} \left( -\left( B^{\rho} \right)^{\sharp} \cap [-1,1]^M  \right) &\ge \left[ 1 +  \left( \frac{1 - 1/M}{4 \rho + 1/M} \right) \; \right]^M \qquad &\text{for } \rho \ge 1  \label{E:volume_estimate_ge1} \\
Vol^{\mathbb{R}^M} \left( -\left( B^{\rho} \right)^{\sharp} \cap [-1,1]^M  \right) &\ge  2^{M-1} \left[  1 - \rho / 2 (1-\rho) \; \right]  \qquad  &\text{for } \rho < 1    \label{E:volume_estimate_less1}
\end{align}
\end{proposition}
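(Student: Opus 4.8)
The plan is to write the region's defining inequalities explicitly and then evaluate the volume by integrating out one coordinate at a time, choosing the order of integration differently according to whether $\rho\ge 1$ or $\rho<1$ --- which is the source of the case split. The cone $B^\rho$ is cut out by the inequalities $\beta_1\ge 0$ and $\rho\beta_{i+1}-\beta_i\ge 0$ $(i=1,\dots,M-1)$, so by Farkas duality $-\left(B^\rho\right)^\sharp=\operatorname{cone}\{e_1,\ \rho e_2-e_1,\ \dots,\ \rho e_M-e_{M-1}\}$; expanding a point of this cone in those (triangular, independent) generators and solving for the coefficients gives
\[
-\left(B^\rho\right)^\sharp=\Big\{\,y\in\mathbb R^M\ :\ \phi_k(y):=\sum_{j=k}^{M}\rho^{\,k-j}y_j\ \ge 0\quad(k=1,\dots,M)\,\Big\}.
\]
Thus $R:=-\left(B^\rho\right)^\sharp\cap[-1,1]^M$ is the set of $y\in[-1,1]^M$ whose geometrically weighted tails $\phi_k$ are all non-negative; write $R_m\subset[-1,1]^m$ for the same region in $m$ variables (with functionals $\phi_k^{(m)}$), and note $\phi_k=y_k+\rho^{-1}\phi_{k+1}$ and $\operatorname{Vol}(R_1)=1$.

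For $\rho\ge 1$ I would peel off $y_1$ repeatedly. Fixing all but the first coordinate of $R_{m+1}$, the only constraint on $y_1$ is $\phi_1=y_1+\rho^{-1}\phi_2\ge 0$, so $y_1$ sweeps an interval of length $1+\min(\rho^{-1}\phi_2,1)$ (using $\phi_2\ge 0$), and $\phi_2$ is exactly the functional $\phi_1^{(m)}$ on the relabelled lower-dimensional region; integrating gives $\operatorname{Vol}(R_{m+1})=(1+c_m)\operatorname{Vol}(R_m)$ with $c_m=\mathbb E_{\operatorname{Unif}(R_m)}\big[\min(\rho^{-1}\phi_1^{(m)},1)\big]$, hence $\operatorname{Vol}(R_M)=\prod_{m=1}^{M-1}(1+c_m)$. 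The crucial estimate is $c_m\ge\tfrac1{2\rho}$ for every $m$: fixing all but $y_1$ in $R_m$, the quantity $\phi_1^{(m)}=y_1+b$ (where $b:=\rho^{-1}\phi_2^{(m)}\ge 0$) is uniform on $[\max(0,b-1),\,b+1]$, whose left endpoint is $\ge 0$ and whose length is $\le 2$; over all $b\ge 0$ the conditional expectation $\mathbb E[\min(\phi_1^{(m)}/\rho,1)\mid b]$ is non-decreasing in $b$ (immediate from $\min(\cdot,\rho)$ being non-decreasing), so it is minimized at $b=0$, where it equals $\mathbb E_{u\sim\operatorname{Unif}[0,1]}[\min(u/\rho,1)]=\mathbb E[u/\rho]=\tfrac1{2\rho}$ since $\rho\ge 1$ (and $c_1=\tfrac1{2\rho}$ directly). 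Hence $\operatorname{Vol}(R_M)\ge\big(1+\tfrac1{2\rho}\big)^{M-1}$, and (\ref{E:volume_estimate_ge1}) follows from $\big(1+\tfrac1{2\rho}\big)^{M-1}\ge\big(1+\tfrac{M-1}{4\rho M+1}\big)^M$: taking logs, bound the right side's $M$-th root log by $\tfrac{M-1}{4\rho M+1}$, bound the left side's by $\tfrac{M-1}{M}\cdot\tfrac1{4\rho}$ using $\ln(1+\tfrac1{2\rho})\ge\tfrac1{4\rho}$ (i.e.\ $e^{1/(4\rho)}\le 1+\tfrac1{2\rho}$, true because $e^x\le 1+2x$ on $[0,\tfrac14]$), and observe $\tfrac{M-1}{4\rho M}\ge\tfrac{M-1}{4\rho M+1}$.

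For $\rho<1$ I would instead integrate $y_M$ out last. For any $(y_1,\dots,y_{M-1})\in[-1,1]^{M-1}$ the constraints $\phi_k(y)\ge 0$ read $y_M\ge-\rho^{M-k}\phi_k^{(M-1)}$ for $k\le M-1$ together with $y_M\ge 0$, so $y_M$ ranges over $[T,1]$, where, writing $P_r:=\sum_{j=1}^{r}\rho^{\,j}y_{M-j}$,
\[
T=\max\Big(0,\ \max_{1\le k\le M-1}\big(-\rho^{M-k}\phi_k^{(M-1)}\big)\Big)=\max\big(0,\,-P_1,\dots,-P_{M-1}\big).
\]
Hence $\operatorname{Vol}(R)=\int_{[-1,1]^{M-1}}(1-T)^{+}\,dy=2^{M-1}\,\mathbb E\big[(1-T)^{+}\big]\ge 2^{M-1}\big(1-\mathbb E[T]\big)$, the expectation over $(y_1,\dots,y_{M-1})$ uniform on $[-1,1]^{M-1}$. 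Since $-P_r=\sum_{j=1}^{r}\rho^{j}(-y_{M-j})\le\sum_{j=1}^{M-1}\rho^{j}(-y_{M-j})^{+}$ for every $r$, one gets $T\le\sum_{j=1}^{M-1}\rho^{j}(-y_{M-j})^{+}$, and using $\mathbb E[(-y)^{+}]=\tfrac14$ for $y$ uniform on $[-1,1]$ this yields $\mathbb E[T]\le\tfrac14\sum_{j=1}^{M-1}\rho^{j}<\tfrac{\rho}{4(1-\rho)}\le\tfrac{\rho}{2(1-\rho)}$, i.e.\ (\ref{E:volume_estimate_less1}) --- in fact with the better constant $\tfrac14$ in place of $\tfrac12$.

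I expect the main obstacle to be the bookkeeping in the two one-variable reductions: correctly identifying the admissible interval for the integrated-out coordinate, and in particular the conditional-uniformity statement for $\phi_1^{(m)}$ together with the monotonicity lemma giving $\min_{b\ge0}\mathbb E[\min(\phi_1^{(m)}/\rho,1)\mid b]=\tfrac1{2\rho}$ in the regime $\rho\ge1$. That lemma is the only place the hypothesis $\rho\ge1$ is used, and is what forces the case distinction the authors flag; by contrast, once one notices that for $\rho<1$ all $M$ constraints become lower bounds on the single coordinate $y_M$, that case is comparatively short.
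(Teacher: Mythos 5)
Your proposal is correct, but it proves the two estimates by a genuinely different method than the paper. Both arguments start from the same halfspace description of $-(B^{\rho})^{\sharp}$ (your Farkas-duality derivation gives exactly the paper's (\ref{E:B_rho_sharp_equations}), which the paper obtains instead by exhibiting the generators $x^{(r)}$ of $B^{\rho}$), and both split at $\rho=1$ for the same structural reason. After that the paper is purely combinatorial: for $\rho\ge 1$ it packs $2^{M}$ explicitly constructed disjoint rectangles into $-(B^{\rho})^{\sharp}\cap[-1,1]^{M}$ (the recursive bisection encoded by the function $P[\cdot]$) and sums their volumes to get $(1+(4\rho)^{-1})^{M-1}$; for $\rho<1$ it splits $[-1,1]^{M-1}$ into $2^{M-1}$ sign-cubes and bounds the admissible $y_M$-interval on each cube by its worst corner. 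You instead compute the volume by iterated integration: for $\rho\ge 1$ a one-coordinate peeling recursion $\mathrm{Vol}(R_{m+1})=(1+c_m)\mathrm{Vol}(R_m)$ with a conditional-expectation lower bound $c_m\ge (2\rho)^{-1}$, and for $\rho<1$ an exact Fubini reduction $\mathrm{Vol}(R)=2^{M-1}\mathbb{E}[(1-T)^{+}]$ with $T$ bounded by $\sum_j \rho^{j}(-y_{M-j})^{+}$. What your route buys is sharper intermediate constants --- $(1+\tfrac{1}{2\rho})^{M-1}$ instead of $(1+\tfrac{1}{4\rho})^{M-1}$, and $2^{M-1}\bigl(1-\tfrac{\rho}{4(1-\rho)}\bigr)$ instead of $2^{M-1}\bigl(1-\tfrac{\rho}{2(1-\rho)}\bigr)$ --- essentially because you integrate $(-y)^{+}$ exactly (mean $1/4$) where the paper discretizes to the worst corner (mean $1/2$); the stated bounds then follow a fortiori, and your closing elementary inequality chain for (\ref{E:volume_estimate_ge1}) checks out. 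What the paper's route buys is an explicit geometric picture (a concrete disjoint family of rectangles inside the cone) with no measure-theoretic bookkeeping. The only spot in your write-up that deserves a line of justification rather than the word ``immediate'' is the claim that $\mathbb{E}\bigl[\min(\phi_1^{(m)}/\rho,1)\mid b\bigr]$ is minimized at $b=0$: for $b<1$ the conditioning interval $[\max(0,b-1),b+1]$ both shifts and stretches, so you need either the short explicit computation of $g(b)=\mathbb{E}\bigl[\min((y_1+b)/\rho,1)\bigr]$ or the observation that $\mathrm{Unif}[\max(0,b-1),b+1]$ stochastically dominates $\mathrm{Unif}[0,1]$ for every $b\ge 0$; either fix is two lines, and the conclusion $c_m\ge(2\rho)^{-1}$ stands.
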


Note that (\ref{E:volume_estimate_less1}) is only useful if $\rho < 2/3$; it may seem that values of $\rho$ near $1$ have been missed by this proposition.  However, for $\rho < 1$ the set $(B^{\rho})^{\sharp}$, and therefore the volume of the set above, is larger than in the case $\rho = 1$, and so for all $\rho < 1$ we have
$$
Vol^{\mathbb{R}^M} \left( -\left( B^{\rho} \right)^{\sharp} \cap [-1,1]^M  \right)  \ge \left[ 1 +  \left( \frac{1 - 1/M}{4 + 1/M} \right) \; \right]^M
$$
This suffices, because the only points we want to make are the following: 
\begin{itemize}
\item{For any $M \ge 2$ and any $\rho \in (0,\infty)$, the amount of ``freedom'' in choosing $\ttt_n$ is strictly greater than that afforded in theorem \ref{thm_landau_firstextension}.  Indeed, although there is \emph{not} some $a >0$ such that we can freely choose each $\cos(\ttt_n)$ in the interval $(-a,1]$, the amount of freedom we are afforded is equivalent to this.}
\item{As $\rho \rightarrow 0$, the amount of ``freedom'' we are afforded approaches $2^{M-1}$.  This is an ``amount'' of freedom equivalent to the following (although the following is \emph{not} the choice we actually have): Choose a single $\cos(\ttt_{Ml+j}) \in (0,1]$ and then $\ttt_{Ml+j'}$ can be arbitrary for $j' \ne j, \; j' \in \{1, \ldots, M\}$.}
\end{itemize}

We also obtain the following sharpness result:

\begin{proposition}\label{P:sharpness}

(I) ($\gamma>0$ is sharp): For any $M$ and any $\rho \in (0,\infty)$, there exists $\{a_n\}$ such that
\begin{itemize}
\item{$\sum a_n n^{-s}$ has $\sigma_a=0$}
\item{ $( |a_{Ml+1}|,   |a_{Ml+2}|, \ldots,  |a_{Ml+M}| )  \in B^{\rho} $ }
\item{ $( \cos(\ttt_{Ml+1}), \cos(\ttt_{Ml+2}), \ldots, \cos(\ttt_{Ml+M}) \; )  \in -\left( B^{\rho} \right)^{\sharp}$  [This is (\ref{E:psi_in_Brhosharp}) with $\gamma=0$]}
\item{ $\sum a_n n^{-s}$ has a holomorphic extension past $s=0$}
\end{itemize}

(II) ($B^{\rho}, (B^{\rho})^{\sharp}$ is sharp): For any $M$ and any $0 < \rho' < \rho$ there exists $\{a_n\}$ and $\gamma >0$ such that
\begin{itemize}
\item{$\sum a_n n^{-s}$ has $\sigma_a=0$}
\item{ $( |a_{Ml+1}|,   |a_{Ml+2}|, \ldots,  |a_{Ml+M}| )  \in B^{\rho} $ }
\item{ $( \cos(\ttt_{Ml+1}), \cos(\ttt_{Ml+2}), \ldots, \cos(\ttt_{Ml+M}) \; )  \in -\left( B^{\rho'} \right)^{\sharp} + \gamma (1,1, \ldots , 1)$ }
\item{ $\sum a_n n^{-s}$ has a holomorphic extension past $s=0$}
\end{itemize}

\end{proposition}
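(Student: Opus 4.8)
The plan is to prove both parts at once, with a single family of examples in which the would‑be pole of the Dirichlet series at $s=0$ is removed by alternating the block pattern, so that the surviving ``main term'' is entire, being a multiple of $\eta(s+1)=(1-2^{-s})\zeta(s+1)$. Fix $M\ge 2$ and a vector $\beta=(\beta_{1},\dots,\beta_{M})$ with all entries positive; concretely $\beta_{j}=\rho^{M-j}$, which lies in $B^{\rho}$ (each defining inequality is an equality, since $\rho^{j-1}\beta_j\equiv\rho^{M-1}$) and, because $\rho>\rho'$, violates the first defining inequality $\beta_{1}\le\rho'\beta_{2}$ of $B^{\rho'}$, so $\beta\notin B^{\rho'}$. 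Fix also a ``target cosine vector'' $\psi\in\mathbb R^{M}$ with $\|\psi\|_{\infty}\le\tfrac12$ and $\psi\cdot\beta=0$, put $\ttt^{+}_{j}=\arccos\psi_{j}\in[0,\pi]$ and $\ttt^{-}_{j}=-\ttt^{+}_{j}$, and set
$$
a_{Ml+j}=\frac{\beta_{j}}{l+1}\,e^{\,i\,\ttt^{\,\varepsilon(l)}_{j}},\qquad j=1,\dots,M,\quad l=0,1,2,\dots,
$$
where $\varepsilon(l)=+$ for even $l$ and $\varepsilon(l)=-$ for odd $l$. Then every modulus block $(|a_{Ml+1}|,\dots,|a_{Ml+M}|)$ is a positive multiple of $\beta$ and so lies in $B^{\rho}$; every cosine block equals $\psi$ since $\cos(\pm\ttt^{+}_{j})=\psi_{j}$; and $\sum_{n}|a_{n}|=\|\beta\|_{1}\sum_{l}(l+1)^{-1}=\infty$ while $\sum_{n}|a_{n}|n^{-\sigma}\ll_{M}\|\beta\|_{1}\sum_{l}l^{-\sigma-1}<\infty$ for $\sigma>0$, so $\sigma_{a}=0$.

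To see that $f$ continues holomorphically past $s=0$, I would split $(Ml+j)^{-s}=(M(l+1))^{-s}+\bigl[(Ml+j)^{-s}-(M(l+1))^{-s}\bigr]$. By the mean value theorem the bracket is $O_{M}(|s|\,(Ml+1)^{-\Re s-1})$ for $\Re s>-1$, so summed against $\beta_{j}/(l+1)$ these terms form a series converging locally uniformly on $\Omega_{-1}$, hence holomorphic there; what survives is $M^{-s}\sum_{l\ge0}S^{\varepsilon(l)}(l+1)^{-s-1}$ with $S^{\pm}=\sum_{j}\beta_{j}e^{i\ttt^{\pm}_{j}}$. Because $\Re S^{+}=\sum_{j}\beta_{j}\cos\ttt^{+}_{j}=\psi\cdot\beta=0$, we have $S^{+}=i\tau$ and $S^{-}=\overline{S^{+}}=-i\tau$ with $\tau=\sum_{j}\beta_{j}\sqrt{1-\psi_{j}^{2}}\ge\tfrac{\sqrt3}{2}\|\beta\|_{1}>0$; grouping the $l$‑sum by the parity of $l+1$ and using $\sum_{m\ \mathrm{odd}}m^{-w}=(1-2^{-w})\zeta(w)$ and $\sum_{m\ \mathrm{even}}m^{-w}=2^{-w}\zeta(w)$ collapses it to $i\tau M^{-s}(1-2^{-s})\zeta(s+1)=i\tau M^{-s}\eta(s+1)$, which is entire. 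Hence $f$ is holomorphic on $\Omega_{-1}$, and the last bullet of each part holds.

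It remains to choose $\psi$. For part (I), take $\psi=\mathbf 0$: then $\psi\cdot\beta=0$, every cosine block equals $\mathbf 0\in-(B^{\rho})^{\sharp}$ --- this is (\ref{E:psi_in_Brhosharp}) with $\gamma=0$ --- and $\tau=\|\beta\|_{1}>0$, so the construction above applies verbatim. For part (II) I need $\psi\neq\mathbf 0$ lying in the \emph{interior} of the dual cone $-(B^{\rho'})^{\sharp}$ while still $\psi\cdot\beta=0$. The cone $B^{\rho'}$ is closed, pointed (it sits in $[0,\infty)^{M}$) and full‑dimensional (it contains points where every defining inequality is strict), so $\operatorname{int}\bigl(-(B^{\rho'})^{\sharp}\bigr)=\{\psi:\psi\cdot b>0\ \text{for all }b\in B^{\rho'}\setminus\{0\}\}$ is a nonempty open convex cone; the cone–separation lemma then shows it meets the hyperplane $\beta^{\perp}$ precisely because $\beta\notin B^{\rho'}$ (and $-\beta\notin B^{\rho'}$, which is automatic). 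Rescaling such a $\psi$ to $\|\psi\|_{\infty}\le\tfrac12$ and setting $\gamma=\min\{\psi\cdot b:b\in B^{\rho'},\ \|b\|_{1}=1\}$ --- positive by compactness of that set --- yields $\psi\cdot b\ge\gamma\|b\|_{1}$ for all $b\in B^{\rho'}$, i.e.\ $\psi-\gamma(1,\dots,1)\in-(B^{\rho'})^{\sharp}$, so the cosine blocks satisfy (\ref{E:psi_in_Brhosharp}) with $B^{\rho'}$ and this $\gamma>0$.

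The step I expect to be the main obstacle is this last convex‑geometric input: that the dual cone of a pointed closed convex cone has nonempty interior meeting every hyperplane through a point outside the cone, together with the standard identification of that interior. Everything else --- the modulus bookkeeping, the collapse to $\eta(s+1)$, and the elementary tail estimate for holomorphy --- is routine. (One caveat: for $M=1$ part (II) is empty, as it would contradict Theorem~\ref{thm_landau_firstextension}; it is to be read for $M\ge2$, consistent with the rest of the paper.)
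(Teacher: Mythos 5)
Your proposal is correct, and at bottom it is the paper's own family of counterexamples (\ref{E:counterexamples}): moduli $\beta_j/(l+1)$ with $\beta=(\rho^{M-1},\dots,\rho,1)$ (the paper's $l^{-1}\rho^{-j}$ is the same block up to a harmless scaling), cosines constant along blocks, sines alternating in sign with $l$, and the single orthogonality condition $\psi\cdot\beta=0$, which is equivalent to the paper's sufficient condition (\ref{E:cauchy_suff_condition}). The differences are in how the pieces are verified, and they are worth noting. For the continuation you identify the main term explicitly as $i\tau M^{-s}(1-2^{-s})\zeta(s+1)$ plus a remainder holomorphic on $\Omega_{-1}$, so $f$ continues to all of $\rere s>-1$; the paper instead shows the partial sums of $\sum a_n n^{\eee}$ form a Cauchy sequence --- the same mechanism (alternating series plus an $O(l^{-(2-\eee)})$ error), with your version slightly more explicit about the continued function. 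For part (I) you take $\psi=0$, which is legitimate (the zero vector does lie in $-(B^{\rho})^{\sharp}$, and all four bullets hold) but degenerate: every $a_n$ is purely imaginary, so the example never engages the cone structure; the paper instead exhibits a nonzero boundary vector $\delta_j=c(-1)^{M-j}\rho^j$ of the dual cone. For part (II) you replace the paper's explicit vector $x=(-\rho^{-(M-1)},0,\dots,0,1)$ --- orthogonal to $(\rho^{-1},\dots,\rho^{-M})$, strictly feasible for the constraints (\ref{E:B_rho_sharp_equations}) of $(B^{\rho'})^{\sharp}$, then perturbed --- by an abstract duality statement: the interior of the dual cone of the closed pointed cone $B^{\rho'}$ meets $\beta^{\perp}$ because neither $\beta$ nor $-\beta$ lies in $B^{\rho'}$. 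You leave this as a cited ``cone--separation lemma,'' and as stated in your parenthetical form it is true and quick to prove: the interior is nonempty and equals $\{y: y\cdot b>0\ \forall b\in B^{\rho'}\setminus\{0\}\}$; if it missed $\beta^{\perp}$ then, being convex, $y\cdot\beta$ would have constant sign on it, and passing to its closure $-(B^{\rho'})^{\sharp}$ and applying the bipolar theorem would force $\beta\in B^{\rho'}$ or $-\beta\in B^{\rho'}$, a contradiction. So your route is sound; what the paper's explicit $x$ buys is avoiding any appeal to duality/separation, while your argument buys a proof that does not require guessing a particular vector and makes clear exactly which convex-geometric fact drives the sharpness in $\rho$ versus $\rho'$.
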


In section \ref{S:orig_proof}, we review the proof of Landau's theorem.  In section \ref{S:examine_proof}, we examine the proof and broaden the hypotheses.  In section \ref{S:extend_thm}, we obtain conditions on $\{a_n\}$ which imply that these broadened hypotheses are satisfied and thus prove theorem \ref{T}.  In section \ref{S:volume}, we prove the volume estimates in proposition \ref{P:volume_estimate}, and in section \ref{S:Sharpness} we prove the sharpness result in proposition \ref{P:sharpness}.

\section{Proof of Landau's Theorem}\label{S:orig_proof}

Our result will build on a standard proof of Landau's theorem, so we begin by reviewing this proof.

\begin{proof}[Proof of Theorem \ref{thm_Landau}]

We begin by supposing \emph{only} that $f(s) = \sum a_n n^{-s}$ has abscissa of absolute convergence equal to $0$.  The condition $a_n \ge 0$ is not yet assumed; when it is used, we will indicate this explicitly.

For contradiction, we assume that $f$ does extend holomorphically to a neighborhood of $0$; suppose that $f$ is holomorphic on $\mathbb{D}(0, 2\eee)$, $\eee >0$.  We have
\begin{align}
f(s) &= \sum_{n =1}^{\infty} a_n n^{-\eee} n^{-(s-\eee)} \nonumber \\
&= \sum_{n =1}^{\infty} a_n n^{-\eee} \exp( - (s - \eee) \log n) \nonumber \\
&= \sum_{n =1}^{\infty} a_n n^{-\eee} \left\{ \sum_{k = 0}^{\infty} \frac{ (-1)^k (\log n)^k (s - \eee)^k}{k!} \right\} \nonumber \\
&= \sum_{n =1}^{\infty}  \left\{ \sum_{k = 0}^{\infty} a_n n^{-\eee} \frac{ (-1)^k (\log n)^k (s - \eee)^k}{k!} \right\} \nonumber
\end{align}
This double series converges absolutely for $|s - \eee| < \eee$, since the sum of the absolute values can be re-arranged to equal
$$
\sum_{n =1}^{\infty} |a_n | n^{-(\eee - |s - \eee|)} 
$$
which is finite by assumption.  Therefore, we re-arrange the double series to obtain
$$
f(s) = \sum_{k = 0}^{\infty} \left\{ \frac{(-1)^k}{k!} \sum_{n =1}^{\infty}   a_n n^{-\eee} (\log n)^k  \right\} (s - \eee)^k
$$
We see that this is the power series for $f$ about the point $s=\eee$.  We have only asserted the convergence of this power series for $|s-\eee|<\eee$.  However, by the assumption that $f$ is holomorphic on $\mathbb{D}(0,2 \eee)$, it must be the case that this power series in fact converges absolutely for $|s-\eee|<2\eee$ (since $\mathbb{D}(\eee,2\eee) \subset \big( \; \mathbb{D}(0,2\eee) \cup RHP \; \big) $).  Therefore, we have finiteness of the expression
\begin{equation}\label{E:power_series_finite}
\sum_{k = 0}^{\infty}  \left| \frac{(-1)^k}{k!} \sum_{n =1}^{\infty}   a_n n^{-\eee} (\log n)^k  \right| |s - \eee|^k
\end{equation}
for $|s-\eee| < 2\eee$.

We could complete the proof \emph{if} we could obtain finiteness of the expression
\begin{equation}\label{E:double_series_finite}
\sum_{k = 0}^{\infty} \sum_{n =1}^{\infty}   |a_n| n^{-\eee} \frac{ (\log n)^k |s - \eee|^k}{k!} 
\end{equation}
for $|s-\eee| < 2\eee$.  This is because, if (\ref{E:double_series_finite}) were finite, then we could re-arrange (\ref{E:double_series_finite}) to obtain
$$
\sum_{n =1}^{\infty} |a_n | n^{-(\eee - |s - \eee|)} < \infty
$$
for $|s-\eee| < 2\eee$.  This would mean that $\sum a_n n^{-s}$ converges absolutely at $s = - \eee/2$ (for example), a contradiction.

It is here that we use the assumption $a_n \ge 0$.  With this requirement on the $a_n$, we note that
$$
\sum_{k = 0}^{\infty}  \left| \frac{(-1)^k}{k!} \sum_{n =1}^{\infty}   a_n n^{-\eee} (\log n)^k  \right| |s - \eee|^k \; = \; \sum_{k = 0}^{\infty} \sum_{n =1}^{\infty}   |a_n| n^{-\eee} \frac{ (\log n)^k |s - \eee|^k}{k!} 
$$
Therefore, we obtain finiteness of (\ref{E:double_series_finite}) and the proof is complete.

\end{proof}

\section{Examining The Proof}\label{S:examine_proof}

Examining this proof, we see that if we only assume:
\begin{itemize}
\item{ $f$ has abscissa of absolute convergence equal to $0$ }
\item{ $f$ extends holomorphically to $\mathbb{D}(0,2\eee)$ }
\end{itemize}
then (\ref{E:power_series_finite}) is finite for all $s \in \mathbb{D}(\eee, 2 \eee)$.  We will re-write (\ref{E:power_series_finite}) as
\begin{equation}\label{E:power_series_finite2}
\sum_{k = 0}^{\infty} \frac{1}{k!}  \left|  \sum_{n =1}^{\infty}   a_n n^{-\eee} (\log n)^k  \right| \; |s - \eee|^k 
\end{equation}
We obtain a contradiction if we can show that (\ref{E:double_series_finite}) is finite for some $s, \;  |s-\eee| > \eee$.  We will re-write (\ref{E:double_series_finite})  as
\begin{equation}\label{E:double_series_finite2}
\sum_{k = 0}^{\infty} \frac{1}{k!} \left[ \sum_{n =1}^{\infty}   |a_n| n^{-\eee} (\log n)^k \; \right] |s - \eee|^k \; .
\end{equation}
We can prove that $f$ fails to have a holomorphic extension about $s=0$ if, for all sufficiently small $\eee$, the finiteness of (\ref{E:power_series_finite2}) for all $s \in \mathbb{D}(\eee,2 \eee)$ implies the finiteness of (\ref{E:double_series_finite2}) for some $s, |s-\eee| > \eee$.  In other words, we can prove the theorem if the implication (\ref{E:general_suff_condition}) below is true for all sufficiently small $\eee$:
\begin{align}
&\sum_{k = 0}^{\infty} \frac{1}{k!}  \left|  \sum_{n =1}^{\infty}   a_n n^{-\eee} (\log n)^k  \right| \; |s - \eee|^k  < \infty \;\; \text{for all } s \in \mathbb{D}(\eee, 2 \eee) \nonumber \\
& \;\;\;\; \implies \; \sum_{k = 0}^{\infty} \frac{1}{k!} \left[ \sum_{n =1}^{\infty}   |a_n| n^{-\eee} (\log n)^k \; \right] |s - \eee|^k < \infty \;\; \text{for some } s, \;  |s-\eee| > \eee  \label{E:general_suff_condition}
\end{align}
We will investigate a very specific way in which (\ref{E:general_suff_condition}) will be true for all sufficiently small $\eee$.  Specifically, we seek conditions on the $\{a_n\}$ which imply that the ``key'' set of inequalities
\begin{equation}\label{keyineq}
\sum_{n =1}^{\infty}   |a_n| n^{-\eee} (\log n)^k \le C_{\eee} \left|  \sum_{n =1}^{\infty}   a_n n^{-\eee} (\log n)^k  \right| \;\; \forall k \ge 0, \;\;\; C_{\eee} \text{ independant of } k \; .
\end{equation}
holds for all sufficiently small $\eee$.  In principle, one could obtain ``(\ref{E:general_suff_condition}) for all sufficiently small $\eee$'' in other ways, but we will focus on obtaining ``(\ref{keyineq}) for all sufficiently small $\eee$''.

To summarize, we have 

\begin{theorem}[Landau's Theorem, Re-formulated]\label{thm_keyineq}

Suppose that $f(s)= \sum a_n n^{-s}$ has abscissa of absolute convergence equal to $0$.  If 
$$
\sum_{n =1}^{\infty}   |a_n| n^{-\eee} (\log n)^k \le C_{\eee} \left|  \sum_{n =1}^{\infty}   a_n n^{-\eee} (\log n)^k  \right| \;\; \forall k \ge 0, \;\;\; C_{\eee} \text{ independant of } k
$$
holds for all sufficiently small $\eee$, then $f$ does not have a holomorphic extension to a neighborhood of $0$.

\end{theorem}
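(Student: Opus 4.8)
The plan is to argue by contradiction, reassembling the observations made in the discussion preceding the statement. Suppose $f$ \emph{does} extend holomorphically to a neighbourhood of $s=0$. Then $f$ is holomorphic on some disc $\mathbb{D}(0,\delta)$, hence on $\mathbb{D}(0,2\eee)$ for every $\eee \in (0,\delta/2)$; in particular we may fix one such $\eee$ that is \emph{also} small enough that the hypothesised inequality (\ref{keyineq}) holds for it. This matching of quantifiers --- a single $\eee$ that is simultaneously ``sufficiently small'' for the hypothesis and small enough that $\mathbb{D}(0,2\eee)$ lies inside the domain of holomorphy --- is the only point needing any care, and it is handled simply by shrinking $\eee$.

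First I would record the consequence of holomorphy. Exactly as in the proof of Theorem \ref{thm_Landau}, expanding $f$ about $s=\eee$ and using that $\mathbb{D}(\eee,2\eee)$ is contained in the union of $\mathbb{D}(0,2\eee)$ with the right half-plane, the Taylor series of $f$ at $\eee$ has radius of convergence at least $2\eee$. That is precisely the statement that (\ref{E:power_series_finite2}) is finite for every $s$ with $|s-\eee|<2\eee$.

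Next I would feed in the key inequality term by term. By (\ref{keyineq}), for every $k\ge 0$ we have $\sum_{n\ge 1}|a_n| n^{-\eee}(\log n)^k \le C_\eee \bigl|\sum_{n\ge 1} a_n n^{-\eee}(\log n)^k\bigr|$, so the series (\ref{E:double_series_finite2}) is dominated coefficient-by-coefficient (in $k$) by $C_\eee$ times (\ref{E:power_series_finite2}); hence (\ref{E:double_series_finite2}) is finite for every $s$ with $|s-\eee|<2\eee$. Now choose any such $s$ with $|s-\eee|>\eee$, say $s=-\eee/2$. Since (\ref{E:double_series_finite2}) is a double series of non-negative terms, Tonelli's theorem permits interchanging the sums over $k$ and $n$ and summing the exponential series in $k$, yielding $\sum_{n\ge 1}|a_n| n^{-(\eee-|s-\eee|)}<\infty$. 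Because $\eee-|s-\eee|<0$, this says the Dirichlet series converges absolutely at a point with negative real part, contradicting $\sigma_a=0$ and completing the proof.

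There is no substantive obstacle here: the mathematical content was already extracted in the passage from Theorem \ref{thm_Landau} to the implication (\ref{E:general_suff_condition}), and this reformulation merely names (\ref{keyineq}) as a clean sufficient condition for that implication. The genuinely hard work --- exhibiting natural hypotheses on $\{a_n\}$ under which (\ref{keyineq}) actually holds for all small $\eee$ --- is deferred to the later sections.
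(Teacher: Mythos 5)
Your proof is correct and follows essentially the same route as the paper: the paper proves Theorem \ref{thm_keyineq} implicitly through the discussion in Section \ref{S:examine_proof}, which likewise takes the finiteness of (\ref{E:power_series_finite2}) on $\mathbb{D}(\eee,2\eee)$ from the Landau argument, uses (\ref{keyineq}) with $C_\eee$ independent of $k$ to dominate (\ref{E:double_series_finite2}), and rearranges to contradict $\sigma_a=0$. Your explicit handling of the quantifier matching (choosing one $\eee$ small enough for both the hypothesis and the domain of holomorphy) and the Tonelli rearrangement at $s=-\eee/2$ are exactly the steps the paper leaves to the reader.
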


\section{Extending Landau's Theorem: Conditions on Groups of Terms}\label{S:extend_thm}

In order for (\ref{keyineq}) to hold, it is evident that the arguments of the $a_n$ must be ``aligned'' to some degree.  Our main tool for detecting this alignment will be to examine the real part of $\sum_{n =1}^{\infty}   a_n n^{-\eee} (\log n)^k$.  This will detect alignment that is ``oriented towards the positive real axis'' (by rotation, this is equivalent to alignment that is ``oriented'' in any given direction in the same manner).  One clear extension of Landau's theorem is obtained in this way.

\begin{proof}[Proof of Theorem \ref{thm_landau_firstextension}]

Observe that
\begin{align}
\left| \sum_{n =1}^{\infty}   a_n n^{-\eee} (\log n)^k \right| &\ge \rere \sum_{n =1}^{\infty}   a_n n^{-\eee} (\log n)^k	  \nonumber \\
&= \sum_{n =1}^{\infty}  (\rere \; a_n ) n^{-\eee} (\log n)^k	  \nonumber
\end{align}
We write $a_n = |a_n| e^{i \ttt_n}$.  If we had some $\gamma >0$ such that $\cos(\ttt_n) \ge \gamma \; \forall n$, then we would have $ \rere \; a_n = |a_n| \cos(\ttt_n) \ge \gamma |a_n|$ and therefore
$$
\sum_{n =1}^{\infty}  (\rere \; a_n ) n^{-\eee} (\log n)^k \ge \gamma \sum_{n =1}^{\infty} |a_n| n^{-\eee} (\log n)^k
$$
or
$$
\sum_{n =1}^{\infty} |a_n| n^{-\eee} (\log n)^k \le (1/\gamma) \left| \sum_{n =1}^{\infty}   a_n n^{-\eee} (\log n)^k \right|
$$
We see that $1/\gamma$ is independant of $k$, and therefore we apply Theorem \ref{thm_keyineq} and the proof is complete.
\end{proof}

To obtain Theorem \ref{T}, we employ this method, but apply it to groups of terms instead of single terms.  Fix $M \ge 2$ and write
$$
\sum_{n =1}^{\infty}   a_n n^{-\eee} (\log n)^k = \sum_{l=0}^{\infty} \sum_{j=1}^M a_{Ml+j} (Ml+j)^{-\eee} (\log (Ml+j) \; )^k
$$
which yields
\begin{align}
&\rere{ \sum_{n =1}^{\infty}   a_n n^{-\eee} (\log n)^k }  \nonumber \\
&  = \sum_{l=0}^{\infty} \sum_{j=1}^M \rere{ a_{Ml+j} } (Ml+j)^{-\eee} (\log (Ml+j) \; )^k  \nonumber \\
&  = \sum_{l=0}^{\infty} \sum_{j=1}^M |a_{Ml+j}| \cos(\ttt_{Ml+j}) (Ml+j)^{-\eee} (\log (Ml+j) \; )^k  \nonumber
\end{align}
We develop a condition on the group of coefficients $a_{Ml+1} , \ldots , a_{Ml+M} $ that will imply the existence of some $c_{\eee} > 0$ (independant of $k,l$, and in fact it will be independant of $\eee$) such that
\begin{align}
&\sum_{j=1}^M |a_{Ml+j}| \cos(\ttt_{Ml+j}) (Ml+j)^{-\eee} (\log (Ml+j) \; )^k   \nonumber \\
& \;\; \ge c_{\eee} \Big( \; \sum_{j=1}^M |a_{Ml+j}|  (Ml+j)^{-\eee} (\log (Ml+j) \; )^k  \; \Big) \label{E:M_term_condition}
\end{align}
for all sufficiently small $\epsilon > 0$.  Once (\ref{E:M_term_condition}) holds with $c_{\eee}$ independant of $k,l$, for all sufficiently small $\eee$, we have
$$
\rere{ \sum_{n =1}^{\infty}   a_n n^{-\eee} (\log n)^k } \ge c_{\eee} \sum_{n =1}^{\infty}   |a_n| n^{-\eee} (\log n)^k
$$
and the proof of theorem \ref{T} is complete.

We begin with the RHS of (\ref{E:M_term_condition}).  By Taylor expansion, we write
$$
(Ml+j)^{-\eee} = (Ml)^{-\eee} + A \; , \qquad |A| \le \eee (Ml)^{-\eee} l^{-1}
$$
and therefore we have
\begin{align}
&\sum_{j=1}^M |a_{Ml+j}|  (Ml+j)^{-\eee} (\log (Ml+j) \; )^k  \nonumber \\
& \;\;\;\;\; \le (Ml)^{-\eee} \big( 1 +\eee l^{-1} \big) \sum_{j=1}^M |a_{Ml+j}|  (\log (Ml+j) \; )^k \label{E:RHS_with_epsilon_1st_step}
\end{align}
\emph{Suppose} that the following inequality held for $\gamma$ independant of $l,k$:
\begin{equation}\label{E:ineq_no_epsilons}
\sum_{j=1}^M |a_{Ml+j}|  (\log (Ml+j) \; )^k   \le  \gamma^{-1} \sum_{j=1}^M |a_{Ml+j}| \cos(\ttt_{Ml+j}) (\log (Ml+j) \; )^k 
\end{equation}
Applying the Taylor expansion to the LHS in (\ref{E:M_term_condition}) (estimating $\cos(\ttt) \le 1$), we define
$$
\tilde{A} = \eee (Ml)^{-\eee} l^{-1} \sum_{j=1}^M |a_{Ml+j}| (\log (Ml+j) \; )^k
$$
and we have
\begin{align}
&\sum_{j=1}^M |a_{Ml+j}| \cos(\ttt_{Ml+j}) (Ml+j)^{-\eee} (\log (Ml+j) \; )^k   \nonumber \\
& \;\; \ge (Ml)^{-\eee} \sum_{j=1}^M |a_{Ml+j}| \cos(\ttt_{Ml+j}) (\log (Ml+j) \; )^k - \tilde{A}   \nonumber \\
& \;\; \ge (Ml)^{-\eee} \gamma \sum_{j=1}^M |a_{Ml+j}|  (\log (Ml+j) \; )^k   - \tilde{A}  \qquad \text{ [by (\ref{E:ineq_no_epsilons}) ]} \nonumber \\
& \;\; = (Ml)^{-\eee} \left[ \gamma  - \eee l^{-1} \right]  \sum_{j=1}^M |a_{Ml+j}|  (\log (Ml+j) \; )^k   \nonumber \\
& \;\; \ge \left[ \gamma  - \eee l^{-1} \right] \big( 1 +\eee l^{-1} \big)^{-1} \sum_{j=1}^M |a_{Ml+j}|  (Ml+j)^{-\eee} (\log (Ml+j) \; )^k   \qquad \text{ [by (\ref{E:RHS_with_epsilon_1st_step}) ]}  \nonumber
\end{align}
With $\eee<1$ we have $\left[ \gamma  - \eee l^{-1} \right] \big( 1 +\eee l^{-1} \big)^{-1} \ge \left[ \gamma  - l^{-1} \right] \big( 1 + l^{-1} \big)^{-1}$.  We may assume that $a_n=0$ for all small $n$ (since $\sum_{n=1}^{\infty} a_n n^{-s}$ has a holomorphic extension iff $\sum_{n=N}^{\infty} a_n n^{-s}$ does), and therefore we may assume that we are concerned only with large $l$.  For $l$ large (depending only on $\gamma$) we have $\left[ \gamma  - l^{-1} \right] \big( 1 + l^{-1} \big)^{-1} \ge \gamma/2$, and therefore (\ref{E:M_term_condition}) holds (with $c_{\eee} = \gamma/2$)  for all sufficiently small $\eee$, independant of $k,l$ and we are finished.

Therefore, to prove the theorem, it suffices to show that (\ref{E:ineq_no_epsilons}) holds for some $\gamma>0$, independant of $k,l$.  We focus now on (\ref{E:ineq_no_epsilons}).
Let
\begin{align}
\beta_j = \beta_j^{(k,l)} &= |a_{Ml+j}|  (\log (Ml+j) \; )^k  \label{E:beta_j}
\end{align}
and 
$$
\beta = \beta^{(k,l)} = ( \beta_1^{(k,l)} , \ldots , \beta_M^{(k,l)} )
$$
We abbreviate
\begin{align*}
\psi = \psi^{(l)} &= ( \cos(\ttt_{Ml+1}), \cos(\ttt_{Ml+2}), \ldots, \cos(\ttt_{Ml+M}) \; )  \\
\tilde{\psi} &= \psi - \gamma ( 1,1, \ldots , 1)
\end{align*}
We re-write (\ref{E:ineq_no_epsilons}) as $\beta \cdot \psi \ge \gamma ( \beta \cdot ( 1,1, \ldots , 1) \; )$ or
\begin{equation}\label{E:psi_beta_ge_0}
\beta^{(k,l)} \cdot \tilde{\psi}^{(l)}  \ge 0 \qquad \forall k,l
\end{equation}
Our strategy is as follows: Develop a condition on the $|a_n|$ which implies that $\beta^{(k,l)}$ lies in a particular subset $B$ of $[0,\infty)^M$ for all $k,l$ (i.e. $B$ does not depend on $k,l$).  Then, the condition on the $\ttt_n$ is simply $\tilde{\psi}^{(l)} \in -B^{\sharp}$ and (\ref{E:psi_beta_ge_0}) is satisfied.

We have
$$
\frac{\beta_j^{(k,l)} }{ \beta_{j+1}^{(k,l)} }= \frac{|a_{Ml+j}|}{|a_{Ml+(j+1)}|}  \left( \frac{\log(Ml+j)}{\log(Ml+(j+1) )} \right)^k
$$
Suppose we assume that
$$
\frac{|a_{Ml+j}|}{|a_{Ml+(j+1)}|} \le \rho \qquad \forall l , \; \forall j = 1, \ldots, M-1, \;  \text{for some } \rho \in (0,\infty)
$$
Recalling definition (\ref{Brho}), this can be written  $( |a_{Ml+1}|,   |a_{Ml+2}|, \ldots,  |a_{Ml+M}| )  \in B^{\rho}$.  This implies $\beta_j / \beta_{j+1} \le \rho$ for all $k,l$, or
$$
\beta^{(k,l)} \in B^{\rho} \qquad \forall k,l
$$
The set $B^{\rho}$ meets the requirement of being a proper subset of $[0,\infty)^M$ not depending on $k,l$, therefore this is the condition we seek.  We can now prove theorem \ref{T}.  Suppose $( |a_{Ml+1}|,   |a_{Ml+2}|, \ldots,  |a_{Ml+M}| )  \in B^{\rho}$; this means $\beta^{(k,l)} \in B^{\rho}$.  By definition, the set of $\tilde{\psi}$ which satisfy $\tilde{\psi} \cdot \beta \ge 0$ for all $\beta \in B^{\rho}$ equals $-(B^{\rho})^{\sharp}$.  For $\tilde{\psi}^{(l)} \in -(B^{\rho})^{\sharp}$, we therefore have $\tilde{\psi}^{(l)} \cdot \beta^{(k,l)}  \ge 0 ,\; \forall k,l$.  In other words, (\ref{E:psi_beta_ge_0}) holds, thus (\ref{E:ineq_no_epsilons}) holds, and the proof of Theorem \ref{T} is complete.

\section{Volume Calculation}\label{S:volume}

As we mentioned, theorem \ref{T} is only interesting if the restrictions on $\ttt_n$ are broad enough to be a measurable improvement over the requirement $\cos \ttt_n \ge \gamma$.  We require $\tilde{\psi} \in-(B^{\rho})^{\sharp}$, i.e.
$$
( \cos(\ttt_{Ml+1}) , \ldots , \cos( \ttt_{Ml+M}) \; ) \in -(B^{\rho})^{\sharp} + \gamma \begin{pmatrix}  1 \\ \ldots \\1 \end{pmatrix}
$$ 
We want to answer the question 
$$
\text{``How much freedom do we have in choosing }\cos(\ttt_{Ml+1}) , \ldots , \cos( \ttt_{Ml+M}) \text{ ?''}
$$
One way to answer this is to measure the volume
$$
Vol^{\mathbb{R}^M} \left[ \;  \left( -(B^{\rho})^{\sharp} + \gamma \begin{pmatrix}  1 \\ \ldots \\1 \end{pmatrix} \right) \cap [-1,1]^M  \right]
$$
Since this in continuous in $\gamma$, we will estimate
\begin{equation}\label{E:volume}
Vol^{\mathbb{R}^M} \left[ -(B^{\rho})^{\sharp}  \cap [-1,1]^M  \right]
\end{equation}
First, we obtain a more direct description of $ (B^{\rho})^{\sharp}  $, by writing $B^{\rho}$ as the convex cone generated by a finite point set.

\begin{proposition}

Let $x^{(r)} \in \mathbb{R}^M , r = 1, \ldots , M$ be defined by
$$
x^{(r)} =  ( 0 , \ldots , 0 , \rho^{-r} , \rho^{-(r+1)} , \ldots , \rho^{-M})
$$
Then $B^{\rho}$ equals the positive linear span of the $ \{ x^{(r)} \} $.

\end{proposition}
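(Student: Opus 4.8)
The plan is to prove the two inclusions separately, after a change of coordinates that ``straightens out'' the powers of $\rho$ in the definition of $B^{\rho}$.

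First I would note that $B^{\rho}$ is a convex cone: it is cut out of $\mathbb{R}^M$ by the homogeneous linear inequalities $\beta_j \ge 0$ $(1 \le j \le M)$ together with $\rho^{j-1}\beta_j \le \rho^{j}\beta_{j+1}$ $(1 \le j \le M-1)$. Hence, to see that the positive linear span of $\{x^{(r)}\}$ is contained in $B^{\rho}$, it suffices to check $x^{(r)} \in B^{\rho}$ for each $r$, and this is a one-line computation: the $j$-th coordinate of $x^{(r)}$ is $0$ for $j < r$ and $\rho^{-j}$ for $j \ge r$, so $\rho^{j-1} x^{(r)}_j$ equals $0$ for $j < r$ and $\rho^{-1}$ for $j \ge r$; this sequence is non-decreasing in $j$, which is exactly the defining chain of inequalities, and all coordinates are $\ge 0$.

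For the reverse inclusion I would introduce the coordinates $d_j := \rho^{j-1}\beta_j$, $j = 1, \ldots, M$, so that $\beta \in B^{\rho}$ if and only if $0 \le d_1 \le d_2 \le \cdots \le d_M$, and in these coordinates each generator $x^{(r)}$ becomes $\rho^{-1}$ times the staircase vector $u^{(r)} = (\,\underbrace{0,\ldots,0}_{r-1},\,1,\ldots,1\,)$. Given $\beta \in B^{\rho}$, the telescoping identity $(d_1,\ldots,d_M) = d_1\,u^{(1)} + \sum_{r=2}^{M}(d_r - d_{r-1})\,u^{(r)}$ exhibits the non-decreasing nonnegative vector $(d_j)$ as a nonnegative combination of the $u^{(r)}$, since $d_1 \ge 0$ and $d_r - d_{r-1} \ge 0$. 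Undoing the change of variables gives $\beta = \sum_{r=1}^{M} c_r x^{(r)}$ with $c_1 = \rho d_1 \ge 0$ and $c_r = \rho(d_r - d_{r-1}) \ge 0$ for $r \ge 2$, so $\beta$ lies in the positive linear span of $\{x^{(r)}\}$. Combining the two inclusions completes the proof.

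There is no genuine obstacle here; the only point requiring care is the bookkeeping with the powers of $\rho$ and the off-by-one indexing of the staircase vectors, and the substitution $d_j = \rho^{j-1}\beta_j$ is designed precisely to make both transparent.
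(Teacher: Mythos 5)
Your proof is correct and follows essentially the same route as the paper: the substitution $d_j = \rho^{j-1}\beta_j$ is only a cosmetic repackaging, since your telescoping coefficients $c_1 = \rho d_1$, $c_r = \rho(d_r - d_{r-1})$ are exactly the paper's $\rho\beta_1$ and $\rho^r(\beta_r - \rho^{-1}\beta_{r-1})$. The only (minor) addition is that you spell out why the positive span is contained in $B^{\rho}$, which the paper leaves implicit.
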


\begin{corollary}

\begin{equation}\label{E:B_rho_sharp_equations}
(B^{\rho})^{\sharp} = \left\{ y = ( y_1 , \ldots , y_M) : \sum_{j=r}^M \rho^{-j} y_j \le 0  \;\;\; \forall r=1, \ldots, M \right\}
\end{equation}
\end{corollary}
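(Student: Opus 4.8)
The plan is to prove the Proposition first and then read the Corollary off from the standard description of the polar of a finitely generated convex cone. For the Proposition, the key move is the change of coordinates $d_j := \rho^{\,j-1}\beta_j$, $j = 1,\dots,M$, under which the chain of inequalities defining $B^{\rho}$ collapses to the single statement $0 \le d_1 \le d_2 \le \cdots \le d_M$; in these coordinates the generators $x^{(r)}$ become (up to the factor $\rho^{-1}$) the ``step tuples'' that are $0$ before index $r$ and constant from $r$ on, and it is then essentially visual that every monotone non-negative tuple is a non-negative combination of them.

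In detail: each $x^{(r)}$ lies in $B^{\rho}$, since $\rho^{\,j-1}x^{(r)}_j$ equals $0$ for $j<r$ and $\rho^{-1}$ for $j\ge r$, a non-decreasing non-negative sequence; because $B^{\rho}$ is a convex cone (it is cut out of the convex cone $[0,\infty)^M$ by the homogeneous linear inequalities $\rho^{\,j-1}\beta_j \le \rho^{\,j}\beta_{j+1}$), the positive span $\mathrm{pos}\{x^{(1)},\dots,x^{(M)}\}$ is contained in $B^{\rho}$. For the reverse inclusion, take $\beta \in B^{\rho}$, set $d_j = \rho^{\,j-1}\beta_j$ (so $0 \le d_1 \le \cdots \le d_M$), and put $c_1 = \rho\,d_1$ and $c_j = \rho\,(d_j - d_{j-1})$ for $2 \le j \le M$. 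Each $c_r \ge 0$, and a one-line telescoping check shows the $j$-th component of $\sum_{r=1}^M c_r x^{(r)}$ is $\rho^{-j}\sum_{r=1}^j c_r = \rho^{-j}\cdot\rho\,d_j = \rho^{\,1-j}\rho^{\,j-1}\beta_j = \beta_j$. Hence $\beta \in \mathrm{pos}\{x^{(r)}\}$, which proves the Proposition.

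The Corollary is then immediate. For any convex cone $C = \mathrm{pos}\{x^{(1)},\dots,x^{(M)}\}$ one has $y \in C^{\sharp}$ iff $y\cdot x^{(r)} \le 0$ for every $r$: the forward implication holds because each $x^{(r)}\in C$, and the converse because every $c\in C$ has the form $\sum_r c_r x^{(r)}$ with $c_r\ge 0$, whence $y\cdot c = \sum_r c_r\,(y\cdot x^{(r)}) \le 0$. Applying this with $C = B^{\rho}$ and computing $y\cdot x^{(r)} = \sum_{j=r}^M \rho^{-j}y_j$ yields exactly (\ref{E:B_rho_sharp_equations}).

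There is no genuinely hard step here; the only things to watch are the indexing in the telescoping sum and the fact that membership in $B^{\rho}$ already forces $\beta_j \ge 0$ for all $j$ (so the $d_j$, and hence the $c_j$, come out non-negative). One may also note that no closedness subtlety intervenes, since the positive span of finitely many vectors is automatically a closed polyhedral cone and we have verified the set equality with $B^{\rho}$ directly.
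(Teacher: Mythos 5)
Your proof is correct and essentially identical to the paper's: your coefficients $c_r = \rho\,(d_r - d_{r-1}) = \rho^{\,r}(\beta_r - \rho^{-1}\beta_{r-1})$ are exactly the decomposition the paper writes down for $\beta \in B^{\rho}$, with the monotone-coordinate substitution serving only as motivation. Your derivation of the corollary from the generators via the standard polar-of-a-finitely-generated-cone fact is the same (implicit) step the paper takes.
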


\begin{proof}[Proof of Proposition]

We see that $x^{(r)} \in B^{\rho}$ is clear.  If $\beta \in B^{\rho}$ then
$$
\beta = \rho^1 \beta_1 x^{(1)} + \rho^2 (\beta_2 - \rho^{-1} \beta_1)x^{(2)} + \cdots + \rho^M (\beta_M - \rho^{-1} \beta_{M-1} ) x^{(M)}
$$
Each coefficient is positive, so we have written $B^{\rho}$ as a positive linear combination of the $x^{(r)}$.

\end{proof}

Now, we wish to estimate the expression in (\ref{E:volume}).  The cases $\rho \ge 1, \rho < 1$ are treated separately (since different constraints will bind to form the set $-(B^{\rho})^{\sharp}  \cap [-1,1]^M$ in these two cases).

\subsection*{The case $\rho \ge 1$}

The idea is to exhibit a certain disjoint union of rectangles contained in $-(B^{\rho})^{\sharp} \cap [-1,1]^M$, using the description of $-(B^{\rho})^{\sharp}$ given in (\ref{E:B_rho_sharp_equations}).  This is obtained by bisecting a subinterval of $[-1,1]$ in each coordinate (so we will have $2^M$ rectangles), but the location where the $j$th coordinate is bisected in a particular rectangle will depend on the ``location'' of that rectangle in the coordinates $j' > j$.  The natural order in which to consider the indices will be ``$ M, M-1, \ldots$,'' as we shall see.  An example will clarify this; consider $M=2$.  We have the set
$$
-(B^{\rho})^{\sharp}  = \left\{ (y_1, y_2 ) : \; \rho^{-2} y_2 \ge 0 \; , \;\; \rho^{-1} y_1 + \rho^{-2} y_2 \ge 0 \; \right\}
$$
(In the following discussion we use $2^{-1}$ to denote $1/2$, our aim is to minimize the number of parentheses and improve readability, we apologize for any confusion.)

We divide the $y_2$ coordinate into the ranges $(0,2^{-1}) \; , \; (2^{-1},1)$.  If $y_2 \in (0,2^{-1})$, the ``worst case'' estimate for the range of values of $y_1$ is the trivial one, $y_1 \ge 0$, so we divide the range for $y_1$ into $(0,2^{-1}) \; , \; (2^{-1},1)$ as well.  If $y_2 \in (2^{-1},1)$, we can estimate the range of values of $y_1$ to always contain the interval
$$
(- 2^{-1} \rho^{-1} , 1)
$$
and we evenly divide this interval into two pieces:
\begin{align*}
&  \Big( -2^{-1} \rho^{-1}  \;\;  , \; -2^{-1} \rho^{-1}  + 2^{-1} ( 1 + 2^{-1} \rho^{-1} \; ) \;  \Big)  \\
& \Big( -2^{-1} \rho^{-1}  + 2^{-1} ( 1 + 2^{-1} \rho^{-1} \; )  \;\; , \;  -2^{-1} \rho^{-1}  + 2 \; 2^{-1} ( 1 + 2^{-1} \rho^{-1} \; ) \; \Big)
\end{align*}
To summarize, we obtain
\begin{align*}
-(B^{\rho})^{\sharp} \cap [-1,1]^2 & \supset \\
&(0,2^{-1}) \times (0,2^{-1}) \\
\cup \; &(2^{-1},1) \times (0,2^{-1}) \\
\cup \; &\Big( -2^{-1} \rho^{-1}  \;\;  , \; -2^{-1} \rho^{-1}  + 2^{-1} ( 1 + 2^{-1} \rho^{-1} \; ) \;  \Big)   \times (2^{-1},1) \\
\cup \; &\Big( -2^{-1} \rho^{-1}  + 2^{-1} ( 1 + 2^{-1} \rho^{-1} \; )  \;\; , \;  -2^{-1} \rho^{-1}  + 2 \; 2^{-1} ( 1 + 2^{-1} \rho^{-1} \; ) \; \Big)   \times (2^{-1},1) 
\end{align*}
(a disjoint union of four rectangles).  Using the set-addition notation 
$$
\Bigg( a + (b+c) , a + 2(b+c) \Bigg) = a + (b+c) \Bigg( 1, 2 \Bigg)
$$
(with large delimiters to distinguish the actual interval from parentheses), this can be written
$$
-(B^{\rho})^{\sharp} \cap [-1,1]^2 \supset  \bigcup_{j_1, j_2 = 0}^1  \; - 2^{-1} \rho^{-1} j_2 + 2^{-1} ( 1 + 2^{-1} \rho^{-1} j_2 ) \Bigg( j_1, j_1+1 \Bigg)  \times 2^{-1} \Bigg( j_2 ,  j_2+1 \Bigg) 
$$
The expressions above will soon become cumbersome, so we define the function $P$, for $x_1, \ldots, x_n \in \mathbb{R}$, by
$$
P[ x_1 , \ldots , x_n ] = x_1 ( 1+ x_2 ( 1+ x_3( \ldots +x_{n-1} ( 1 + x_n ) ) \cdots )
$$
(Use of square brackets in the definition of $P$ is again for readability).  We will use the convention that, if $x_1, \ldots , x_n$ is an ``empty list,'' then $P[ x_1 , \ldots , x_n ] =0$.  In addition, we write the set $ (a_1,b_1 ) \times (a_2,b_2) \times \cdots \times (a_n, b_n)$ as
$$
\{ y : y_i \in (a_i, b_i) , \; i = 1, \ldots , n \}
$$
At last, we can write the following for the case $M=2$:
\begin{align*}
&-(B^{\rho})^{\sharp} \cap [-1,1]^2 \supset \\
&\bigcup_{j_1, j_2 = 0}^1  \left\{ y : \; y_k \in \;\;\;\; -P\big[ 2^{-1} \rho^{-1} j_{k+1} \big] + P\big[ 2^{-1}, 2^{-1} \rho^{-1} j_{k+1} \big] \Bigg( j_k, j_k+1 \Bigg) \;\; , \;\;  k = 1, 2 \right\}
\end{align*}
Note that, if $k=2$ then $k+1 = 3$ and ``$2^{-1} \rho^{-1} j_3$'' is an empty list (there is no $j_3$), so 
$$
P\big[ 2^{-1} \rho^{-1} j_{k+1} \big] = 0 \; , \qquad  P\big[ 2^{-1}, 2^{-1} \rho^{-1} j_{k+1} \big] = P\big[ 2^{-1} \big] = 2^{-1}
$$

Applying this idea in dimension $M$, we obtain the following.

\begin{lemma}\label{P:volume_lemma}

For $B^{\rho} = B^{\rho,M}$ ($\rho \ge 1$), we have,
\begin{align*}
&-(B^{\rho})^{\sharp} \cap [-1,1]^M \supset \\
&\bigcup_{j_1, ... , j_M = 0}^1  \Bigg\{ y : \; y_k  \in \;\;\;\; - P\big[ 2^{-1} \rho^{-1} j_{k+1}, 2^{-1} \rho^{-1} j_{k+2},  \ldots , 2^{-1} \rho^{-1} j_M \big]  \\
& \; + P\big[ 2^{-1}, 2^{-1} \rho^{-1} j_{k+1} , 2^{-1} \rho^{-1} j_{k+2},  \ldots , 2^{-1} \rho^{-1} j_M \big] \Bigg( j_k, j_k+1 \Bigg) \;\;\; , \;\;  k = 1, \ldots, M \Bigg\}
\end{align*}
This is a disjoint union.

\end{lemma}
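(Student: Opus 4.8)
The plan is to reduce the lemma to a single telescoping identity. First I would invoke the corollary, which rewrites the target set as a finite intersection of half-spaces:
$$-(B^{\rho})^{\sharp} = \Big\{ y \in \mathbb{R}^M : \textstyle\sum_{j=r}^M \rho^{-j} y_j \ge 0 \ \text{ for } r = 1, \ldots, M \Big\}.$$
The elementary point is that for any nonempty box $R = \prod_{k=1}^M (a_k, a_k + w_k)$ (all $w_k>0$) and any linear functional $y \mapsto \sum_{j=r}^M \rho^{-j} y_j$ with positive coefficients $\rho^{-j}$, the infimum over $R$ equals $\sum_{j=r}^M \rho^{-j} a_j$; hence $R \subset -(B^{\rho})^{\sharp}$ as soon as the left-endpoint vector $(a_1,\ldots,a_M)$ lies in $-(B^{\rho})^{\sharp}$, and $R \subset [-1,1]^M$ iff $a_k \ge -1$ and $a_k + w_k \le 1$ for every $k$. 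So the whole statement reduces to checking two families of numerical inequalities for the $2^M$ left-endpoint vectors prescribed by the formula, together with pairwise disjointness.

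Second, I would set up notation matching the $P$-function. For a fixed $(j_1,\ldots,j_M) \in \{0,1\}^M$ put $p_k = P[\,2^{-1}\rho^{-1}j_{k+1}, \ldots, 2^{-1}\rho^{-1}j_M\,]$ (empty list when $k=M$, so $p_M = 0$) and $q_k = P[\,2^{-1}, 2^{-1}\rho^{-1}j_{k+1}, \ldots, 2^{-1}\rho^{-1}j_M\,]$, so the $k$th factor of the box is $(a_k, a_k + q_k)$ with $a_k = -p_k + j_k q_k$. The defining recursion $P[x_1,\ldots,x_n] = x_1(1 + P[x_2,\ldots,x_n])$ gives at once $q_k = 2^{-1}(1+p_k)$ and $p_k = \rho^{-1} j_{k+1} q_{k+1}$ for $k \le M-1$ (with the convention $j_{M+1}=0$ covering $k=M$). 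Therefore $a_k = j_k q_k - \rho^{-1} j_{k+1} q_{k+1}$, and the sum defining membership in the $r$th half-space telescopes:
$$\sum_{k=r}^M \rho^{-k} a_k \;=\; \sum_{k=r}^M \rho^{-k} j_k q_k \;-\; \sum_{k=r}^M \rho^{-(k+1)} j_{k+1} q_{k+1} \;=\; \rho^{-r} j_r q_r \;\ge\; 0,$$
valid for every $r$ and every multi-index. This is the heart of the argument, and it is exactly why the coordinates must be processed in the order $M, M-1, \ldots, 1$ (as flagged in the text preceding the lemma).

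Finally I would clear the remaining routine points. For the cube constraint, since $\rho \ge 1$ a downward induction gives $q_k \le 1$ and $p_k \le 1$ (base $p_M = 0$; step $p_k = \rho^{-1} j_{k+1} q_{k+1} \le \rho^{-1} \le 1$), so $a_k \ge -p_k \ge -1$ and the right endpoint $a_k + q_k = -p_k + (j_k+1) q_k \le -p_k + 2 q_k = 1$; thus each box lies in $[-1,1]^M$. For disjointness, given two distinct multi-indices let $k$ be the largest coordinate where they differ; since $p_k$ and $q_k$ depend only on $j_{k+1},\ldots,j_M$, the two boxes share those values, while their $k$th factors are the consecutive intervals $(-p_k, -p_k + q_k)$ and $(-p_k + q_k, -p_k + 2q_k)$, which are disjoint. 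I do not expect any genuine obstacle here: the only thing demanding care is bookkeeping — tracking precisely which of the $j$'s each nested $P$-expression depends on — and once the $P$-recursion is in hand, the telescoping does all the work.
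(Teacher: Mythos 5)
Your proposal is correct and follows essentially the same route as the paper: evaluate the defining functionals $\sum_{j=r}^M \rho^{-j} y_j$ at the left-endpoint corner of each box and let the sum telescope to the single nonnegative term $\rho^{-r} j_r q_r$ (the paper's $\rho^{-r} P\big[2^{-1} j_r, 2^{-1}\rho^{-1} j_{r+1}, \ldots\big]$), check the $[-1,1]^M$ containment by the same endpoint computations, and prove disjointness via the largest differing index. Your $p_k, q_k$ recursion is just a tidier bookkeeping of the paper's direct manipulation of the $P$-expressions, so no substantive difference.
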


\begin{proof}

We wish to consider just one rectangle from the RHS, so fix $j_1 , \ldots , j_M$.  By picking the left endpoint from $(j_k , j_k+1)$, and noting that
$$
a P\big[x_1, \ldots, x_n\big] = P\big[ a x_1, \ldots , x_n\big]
$$
we have
\begin{equation}\label{E:psi_k}
y_k \ge -P\big[ 2^{-1} \rho^{-1} j_{k+1}, \ldots , 2^{-1} \rho^{-1} j_M \big] +  P\big[ 2^{-1} j_k, 2^{-1} \rho^{-1} j_{k+1} , \ldots , 2^{-1} \rho^{-1} j_M \big]
\end{equation}
Plugging this estimate into the sum $\sum_{j=r}^M \rho^{-j} y_j$, the result telescopes to give just the positive term for $j=r$ and the negative term for $j=M$ (and this $j=M$ term is itself an ``empty list''):
\begin{align*}
\sum_{j=r}^M \rho^{-j} y_j \ge & \rho^{-r} P\big[ 2^{-1} j_r, 2^{-1} \rho^{-1} j_{r+1} , \ldots , 2^{-1} \rho^{-1} j_M \big] - \rho^{-M} P\big[ 2^{-1} \rho^{-1} j_{M+1}, \ldots , 2^{-1} \rho^{-1} j_M \big] \;  \\
= & \rho^{-r} P\big[ 2^{-1} j_r, 2^{-1} \rho^{-1} j_{r+1} , \ldots , 2^{-1} \rho^{-1} j_M \big]
\end{align*}
We see that this is positive, since each expression $2^{-1} \rho^{-1} j_n$ is positive, and therefore by (\ref{E:B_rho_sharp_equations}) the rectangle is contained in $-(B^{\rho})^{\sharp} $.

Next, we prove containment in $[-1,1]^M$.  Note that, because $\rho \ge 1$, we have $0 \le 2^{-1} \rho^{-1} j_n \le 2^{-1}$, and because $P$ is monotone increasing in each coordinate (as long as all coordinates are positive), we have
$$
P\big[ 2^{-1} \rho^{-1} j_{k+1}, \ldots , 2^{-1} \rho^{-1} j_M \big] \le P\big[ 2^{-1}, 2^{-1} , \ldots , 2^{-1} \big] \le 1
$$
and therefore, by (\ref{E:psi_k}), we have $y_k \ge -1$.  

Picking the right endpoint from the interval $(j_k, j_k + 1)$ in lemma \ref{P:volume_lemma}, we have
\begin{align*}
y_k &\le - P\big[ 2^{-1} \rho^{-1} j_{k+1} , \ldots \big] + (j_k+1) P\big[ 2^{-1}, 2^{-1} \rho^{-1} j_{k+1} , \ldots \big] \\
&= - P\big[ 2^{-1} \rho^{-1} j_{k+1} , \ldots \big] + (j_k+1) 2^{-1} \Big( 1+ P\big[ 2^{-1} \rho^{-1} j_{k+1} , \ldots \big] \Big) \\
&\le - P\big[ 2^{-1} \rho^{-1} j_{k+1} , \ldots \big] + \Big( 1+ P\big[ 2^{-1} \rho^{-1} j_{k+1} , \ldots \big] \Big) \\
&= 1
\end{align*}
Lastly, we show the union is disjoint.  Let $(j_1, \ldots , j_M) \ne (j_1', \ldots , j_M')$ and denote the respective rectangles by $R, R'$.  Let $K$ be the largest value $i$ between $1$ and $M$ such that $j_i \ne j_i'$.  WLOG, suppose $j_K =0, j_K' = 1$ (and we have $j_i = j_i'$ for $ i>K$).  Consider the $K$th coordinate.  We see that, by definition of the rectangles in lemma \ref{P:volume_lemma}, the above information on $j_i$ implies that, $\forall y \in  R, \forall y' \in R'$ we have
$$
y_K < y_K'
$$
and this proves disjointness.
\end{proof}

Having lemma \ref{P:volume_lemma}, we can prove the estimate (\ref{E:volume_estimate_ge1}) from proposition \ref{P:volume_estimate}.

\begin{proof}[Proof of  (\ref{E:volume_estimate_ge1})]

To obtain the estimate in  (\ref{E:volume_estimate_ge1}), it remains to sum the volume of the rectangles from lemma \ref{P:volume_lemma}.  Let $R$ be the rectangle corresponding to $(j_1, \ldots , j_M)$, and let $V$ be its volume.  We have
$$
V = \prod_{k=1}^M P\big[ 2^{-1}, 2^{-1} \rho^{-1} j_{k+1} , 2^{-1} \rho^{-1} j_{k+2},  \ldots , 2^{-1} \rho^{-1} j_M \big]
$$
and we have
$$
P\big[ 2^{-1}, 2^{-1} \rho^{-1} j_{k+1} , 2^{-1} \rho^{-1} j_{k+2},  \ldots , 2^{-1} \rho^{-1} j_M \big] \ge 2^{-1} ( 1 + 2^{-1} \rho^{-1} j_{k+1} )
$$
Noting the value of this expression when $k=M$ (namely $2^{-1}$), $V$ is greater than or equal to $2^{-M} \prod_{k=1}^{M-1}  ( 1 + 2^{-1} \rho^{-1} j_{k+1} )$.  By binomial expansion, this is
$$
2^{-M} \sum_{\eee = (\eee_1 , \ldots , \eee_{M-1})} (2 \rho)^{- \sum \eee_i } \;  j_1^{\eee_1} \cdots j_{M-1}^{\eee_{M-1}}
$$
Summing this over the index set, the total volume of all the rectangles is greater than or equal to
\begin{align*}
& \sum_{j_1 , \ldots , j_M = 0}^1  \;\; 2^{-M} \sum_{\eee = (\eee_1 , \ldots , \eee_{M-1})} (2 \rho)^{- \sum \eee_i } \;  j_1^{\eee_1} \cdots j_{M-1}^{\eee_{M-1}}  \\
= \; &2^{-M} \sum_{\eee = (\eee_1 , \ldots , \eee_{M-1})}  (2 \rho)^{- \sum \eee_i } \sum_{j_1 , \ldots , j_M = 0}^1  \;\;  \;  j_1^{\eee_1} \cdots j_{M-1}^{\eee_{M-1}}  \\
\end{align*}
We have
\begin{align*}
\sum_{j_1 , ... , j_M = 0}^1  \;\;  \;  j_1^{\eee_1} \cdots j_{M-1}^{\eee_{M-1}}  &= \left(  \sum_{j_1=0}^1 j_1^{\eee_1} \right)   \left(  \sum_{j_2=0}^1 j_2^{\eee_2} \right)   \cdots \left(  \sum_{j_{M-1}=0}^1  j_{M-1}^{\eee_{M-1}} \right) \left(  \sum_{j_M=0}^1 1 \right) \\
&= 2^{1-\eee_1} 2^{1-\eee_2} \cdots 2^{1-\eee_{M-1}} 2 \\ 
&= 2^M  2^{- \sum \eee_i }  \\ 
\end{align*}
Thus
\begin{align*}
V &\ge 2^{-M}  \sum_{\eee = (\eee_1 , \ldots , \eee_{M-1})}  (2 \rho)^{- \sum \eee_i } \;\; 2^M \;\; 2^{- \sum \eee_i }   \\
&= \sum_{\eee = (\eee_1 , \ldots , \eee_{M-1})}  (4 \rho)^{- \sum \eee_i }  \\
&= (1 + (4 \rho)^{-1} \; )^{M-1}
\end{align*}
It is an elementary exersize to show that
$$
\eee \le \frac{1 - 1/M}{4 \rho + 1/M} \; \implies \; (1 + (4 \rho)^{-1} \; )^{M-1} \ge (1+\eee)^M
$$
and therefore
$$
V \ge \left( 1 + \frac{1 - 1/M}{4 \rho + 1/M} \right)^M
$$
which completes the proof of (\ref{E:volume_estimate_ge1}).

\end{proof}

\subsection*{The case $\rho < 1$}

This proof will be somewhat simpler, we look at the set
$$
[-1,1]^{M-1} \times [0,1]
$$
and prove that $-(B^{\rho})^{\sharp}$ has large intersection with this set.  This is done by taking the first $M-1$ coordinates and splitting each into the cases $[-1,0]$ and $[0,1]$, giving a division of $[-1,1]^{M-1}$ into $2^{M-1}$ cubes of dimension $M-1$ (all side lengths being $1$).  Then, on each cube we find the range of values for the $M$th coordinate which will remain within $-(B^{\rho})^{\sharp}$.

\begin{lemma}
For $\rho < 1$,
\begin{align}
&-(B^{\rho})^{\sharp} \cap [-1,1]^M \supset  \nonumber \\
& \;\;\;\;\;\;\; \bigcup_{j_1, ..., j_{M-1} = 0}^1 \left\{ y : y_{k} \in (-j_k, -j_k+1) \; \text{for } k<M, \;\; y_M \in \left(\sum_{k=1}^{M-1} j_k \rho^{M-k}, 1 \right) \; \right\}  \nonumber
\end{align}

\end{lemma}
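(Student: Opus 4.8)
The plan is to prove the containment coordinate-inequality by coordinate-inequality, using the explicit description of the polar cone from the Corollary. Multiplying the $r$th defining inequality in (\ref{E:B_rho_sharp_equations}) by $\rho^M>0$, membership $y\in -(B^{\rho})^{\sharp}$ is equivalent to
$$
\sum_{j=r}^{M}\rho^{M-j}y_j\ \ge\ 0\qquad\text{for every }r=1,\dots,M .
$$
So I would fix $j_1,\dots,j_{M-1}\in\{0,1\}$, take an arbitrary $y$ in the corresponding box on the right-hand side of the lemma, and check these $M$ inequalities together with $y\in[-1,1]^M$.

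For $r=M$ there is nothing to prove: the inequality reads $y_M\ge 0$, and by hypothesis $y_M>\sum_{k=1}^{M-1}j_k\rho^{M-k}\ge 0$. For $r<M$ I would split off the last term (whose coefficient is $\rho^{M-M}=1$) and bound the two pieces separately: since $y_j>-j_k$ — here $j_k$ denotes the $k$th chosen digit, matching the lemma's indexing — and $\rho^{M-j}>0$ for $j\le M-1$, the head sum satisfies $\sum_{j=r}^{M-1}\rho^{M-j}y_j>-\sum_{j=r}^{M-1}\rho^{M-j}j_j$; and since $y_M>\sum_{k=1}^{M-1}j_k\rho^{M-k}$ while the terms with $k<r$ are nonnegative, $y_M>\sum_{k=r}^{M-1}j_k\rho^{M-k}$. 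Adding the two estimates, the two sums cancel term by term and yield $\sum_{j=r}^{M}\rho^{M-j}y_j>0$, as required. This telescoping cancellation — the analogue of the one used in the proof of Lemma \ref{P:volume_lemma}, but simpler because only the single coordinate $y_M$ carries the correction — is the one spot demanding care; the rest is bookkeeping. Containment in $[-1,1]^M$ is then immediate: for $k<M$ the interval $(-j_k,-j_k+1)$ is $(0,1)$ or $(-1,0)$ according as $j_k=0$ or $1$, hence inside $[-1,1]$; and $y_M\in(\sum_{k=1}^{M-1}j_k\rho^{M-k},1)\subset[0,1]$, with the box simply empty (hence vacuously contained) when $\rho$ is so close to $1$ that $\sum_{k=1}^{M-1}j_k\rho^{M-k}\ge 1$.

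Finally, although the statement does not demand it, the union is visibly disjoint: two distinct choices of $(j_1,\dots,j_{M-1})$ differ in some coordinate $k<M$, where the intervals $(0,1)$ and $(-1,0)$ are disjoint, so the boxes are disjoint regardless of the $M$th coordinate. The main obstacle I anticipate is purely clerical — keeping the direction of every inequality consistent through the sign flip $y\mapsto -y$ that defines $-(B^{\rho})^{\sharp}$ and through the telescoping step — so I expect no genuine difficulty.
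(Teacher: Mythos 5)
Your proposal is correct and follows essentially the same route as the paper: take a point of the box, plug the lower bounds $y_k>-j_k$ and $y_M>\sum_k j_k\rho^{M-k}$ into the halfspace description (\ref{E:B_rho_sharp_equations}) of $-(B^{\rho})^{\sharp}$, and observe the term-by-term cancellation (your rescaling by $\rho^{M}$ and dropping the nonnegative terms with $k<r$ is only a cosmetic variant of the paper's extension of the sum to $d=1,\dots,M-1$). The extra checks of containment in $[-1,1]^M$, emptiness of some boxes, and disjointness match what the paper states as clear, so there is nothing to fix beyond the small indexing slip $y_j>-j_k$ (it should read $y_j>-j_j$), which you already flag.
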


We allow for the possibility that the interval $\left(\sum_{k=1}^{M-1} j_k \rho^{M-k}, 1 \right)$ (and thus the corresponding rectangle) is empty.

\begin{proof}[Proof of Lemma]

Suppose $y$ is a member of one of the rectangles on the $RHS$.  (We index by $d$ to avoid $j_j$).  Using (\ref{E:B_rho_sharp_equations}), we calculate: for $r \in \{1, \ldots, M\}$, we have
\begin{align*}
\sum_{d=r}^M \rho^{-d} y_d &= \sum_{d=r}^{M-1} \rho^{-d} y_d  + \rho^{-M} y_M \\
&\ge \sum_{d=r}^{M-1} \rho^{-d} (-j_d) + \rho^{-M} \sum_{d=1}^{M-1} j_d \rho^{M-d}  \\
&\ge \sum_{d=1}^{M-1} \rho^{-d} (-j_d) + \sum_{d=1}^{M-1} j_d \rho^{-d}  \\
&=0
\end{align*}

\end{proof}

Here, disjointness of these rectangles, and containment in $[-1,1]^M$ is clear, so the volume of $-(B^{\rho})^{\sharp} \cap [-1,1]^M$ can be bounded below.  The volume of the rectangle corresponding to $(j_1, \ldots, j_{M-1})$ is greater than or equal to
$$
1 - \sum_{k=1}^{M-1} j_k \rho^{M-k}
$$
(note that this is true when $\sum_{k=1}^{M-1} j_k \rho^{M-k} > 1$ and therefore the rectangle is empty).  So, the volume of all the rectangles together is greater than or equal to
\begin{align*}
\sum_{j_1, ... , j_{M-1} = 0}^1 \left( 1 - \sum_{k=1}^{M-1} j_k \rho^{M-k} \right) &= 2^{M-1} - \sum_{k=1}^{M-1} \rho^{M-k} \sum_{j_1, ... , j_{M-1} = 0}^1 j_k \\
&= 2^{M-1} - \sum_{k=1}^{M-1} \rho^{M-k} ( 2^{M-2} ) \\
&= 2^{M-1} - 2^{M-2} \rho \left( \frac{1-\rho^{M-1}}{1-\rho} \right) \\
&= 2^{M-1} \left[  1 - 2^{-1} \rho \left( \frac{1-\rho^{M-1}}{1-\rho} \right) \; \right] \\
&\ge 2^{M-1} \left[  1 - 2^{-1} \rho / (1-\rho) \; \right] 
\end{align*}
and (\ref{E:volume_estimate_less1}) is proved.

\section{Sharpness}\label{S:Sharpness}

We prove proposition \ref{P:sharpness}, by constructing counterexamples.  Let $\rho \in (0,\infty)$ and $M \ge 2$ be fixed.  All the counterexamples will be of the following form:
\begin{align}
|a_{Ml+j}| &= l^{-1} \rho^{-j}   \nonumber \\
\cos(\ttt_{Ml+j}) = \cos_j &= \lambda \delta_j + \gamma   \nonumber  \\
\sin(\ttt_{Ml+j}) &= (-1)^l \sqrt{ 1 - \cos^2(\ttt_{Ml+j}) }   \label{E:counterexamples}
\end{align}
where $\lambda > 0, \; \delta_j \in [-1,1]$, and $\gamma$ are yet to be determined (subject to the requirement $\lambda \delta_j + \gamma \in [-1,1]$).  We see that our construction already has the following properties:
\begin{itemize}
\item{$\sum a_n n^{-s}$ has $\sigma_a=0$  [This is due to the factor $l^{-1}$]} 
\item{ $( |a_{Ml+1}|,   |a_{Ml+2}|, \ldots,  |a_{Ml+M}| )  \in B^{\rho}$ }
\end{itemize}
We now develop a sufficient condition on $\lambda, \delta_j, \gamma$ under which the sequence of partial sums $\sum_{n=1}^N a_n n^{\eee}$ is a Cauchy sequence for some $\eee>0$, this proves that $\sum a_n n^{-s}$ has a holomorphic extension past $s=0$. 

Consider
$$
\sum_{n=N}^J a_n n^{\eee}
$$
Let $N = M l_0 + j_0, \; J = M l_1 + j_1$ for $l_0, l_1 \ge 0, j_0, j_1 \in \{1, \ldots , M\}$.  This gives
\begin{align}
\sum_{n=N}^J a_n n^{\eee} &= \sum_{j=j_0}^M a_{M l_0 +j} (M l_0 +j)^{\eee} + \sum_{j=1}^{j_1} a_{M l_1 +j} (M l_1 +j)^{\eee}  + \sum_{l=l_0+1}^{l_1-1} \sum_{j=1}^M a_{M l +j} (M l +j)^{\eee} \nonumber \\
&= (I) + (II) + (III)  \label{E:split_sum_anepsilon}
\end{align}
We see that $(I)$ and $(II)$ are bounded in size by a constant times $l_0^{-(1-\eee)}$, which converges to zero as $N \rightarrow \infty$, so we concentrate on $(III)$.  Note that
$$
(Ml+j)^{\eee} = (Ml)^{\eee} + A_{l,j} \; , \qquad |A_{l,j}| \le \eee (Ml)^{\eee} l^{-1}
$$
We have
\begin{align}
\sum_{l=l_0+1}^{l_1-1} \sum_{j=1}^M a_{M l +j} (M l +j)^{\eee} &= \sum_{l=l_0+1}^{l_1-1} l^{-1} \sum_{j=1}^M \rho^{-j} e^{i \ttt_{Ml+j} } ( (Ml)^{\eee} + A_{l,j} )  \nonumber \\
&= \sum_{l=l_0+1}^{l_1-1} l^{-1} \sum_{j=1}^M \rho^{-j} e^{i \ttt_{Ml+j} } A_{l,j}  + \sum_{l=l_0+1}^{l_1-1} l^{-1}  (Ml)^{\eee}  \sum_{j=1}^M \rho^{-j} e^{i \ttt_{Ml+j} }  \nonumber \\
&= (IIIa) + (IIIb)  \label{E:split_subsum_anepsilon}
\end{align}
The first sum, $(IIIa)$, is bounded in size by 
$$
\sum_{l=l_0+1}^{l_1-1} l^{-1} \eee (Ml)^{\eee} l^{-1} \sum_{j=1}^M \rho^{-j}  \le  \eee M^{\eee}  \left( \sum_{j=1}^M \rho^{-j} \right) \sum_{l=l_0+1}^{l_1-1} l^{-(2-\eee)}
$$
and $\sum_{l=l_0+1}^{l_1-1} l^{-(2-\eee)}$ is (part of) the tail of a convergent sum, so it converges to $0$ as $N \rightarrow \infty$.

We have
$$
(IIIb) = \sum_{l=l_0+1}^{l_1-1} l^{-1}  (Ml)^{\eee}  \sum_{j=1}^M \rho^{-j} \left[ \cos(\ttt_{Ml+j})  + i \sin(\ttt_{Ml+j})  \right]   
$$
and since $\cos(\ttt_{Ml+j})$ depends only on $j$, this can be written
\begin{align}
(IIIb) &= M^{\eee} \left( \sum_{j=1}^M \rho^{-j} \cos_j \right)  \sum_{l=l_0+1}^{l_1-1} l^{-(1-\eee)}   \nonumber  \\
& \;\;\; + i M^{\eee} \left( \sum_{j=1}^M \rho^{-j} \sqrt{ 1 - \cos_j^2 }  \right)  \sum_{l=l_0+1}^{l_1-1} (-1)^l l^{-(1-\eee)}  \nonumber \\
& = (IIIb1) + (IIIb2)   \label{E:split_subsubsum_anepsilon}
\end{align}
We see that $ \sum_{l=l_0+1}^{l_1-1} (-1)^{l} l^{-(1-\eee)}$ is (part of) the tail of an alternating series, so $(IIIb2)$ converges to $0$ as $N \rightarrow \infty$.  Therefore, if we have $\sum_{j=1}^M \rho^{-j} \cos_j = 0$, i.e.
\begin{equation}\label{E:cauchy_suff_condition}
\lambda \sum_{j=1}^M \rho^{-j} \delta_j + \gamma \left( \sum_{j=1}^M \rho^{-j}  \right) = 0 
\end{equation}
then we will have $\sum_{j=N}^J a_n n^{\eee} = o(N)$; this is the sufficient condition under which our construction will also satisfy
\begin{itemize}
\item{ $\sum a_n n^{-s}$ has a holomorphic extension past $s=0$}
\end{itemize}

\subsection*{Proposition \ref{P:sharpness} part $(I)$}

Here, we want to find $\{a_n\}$ which satisfy
\begin{itemize}
\item{$\sum a_n n^{-s}$ has $\sigma_a=0$}
\item{ $( |a_{Ml+1}|,   |a_{Ml+2}|, \ldots,  |a_{Ml+M}| )  \in B^{\rho}$ }
\item{ $( \cos(\ttt_{Ml+1}), \cos(\ttt_{Ml+2}), \ldots, \cos(\ttt_{Ml+M}) \; )  \in -\left( B^{\rho} \right)^{\sharp} $  \text{ [this is (\ref{E:psi_in_Brhosharp}) with $\gamma=0$]}   }
\item{ $\sum a_n n^{-s}$ has a holomorphic extension past $s=0$}
\end{itemize}
We choose $\{a_n\}$ as in (\ref{E:counterexamples}), and furthermore we set $\gamma = 0$.  In view of the discussion above, it only remains to prove that we can choose $\lambda, \delta_j \in [-1,1]$ such that the following three properties hold:
\begin{align*}
\lambda \sum_{j=1}^M \rho^{-j} \delta_j &= 0  \qquad \qquad  \text{ [this is (\ref{E:cauchy_suff_condition}) with $\gamma=0$]}  \\
\lambda ( \delta_1 , \ldots , \delta_M ) &\in - (B^{\rho})^{\sharp} \qquad \qquad \text{ [this is (\ref{E:psi_in_Brhosharp}) with $\gamma=0$]}  \\
\lambda \delta_j &\in [-1,1]   \qquad \qquad \text{[this is the requirement $\cos\ttt \in [-1,1]$  ]}
\end{align*}
Evidently, $\lambda$ is irrelevant to the first two properties, so we set it to $1$ (and then the third property is satisfied).  Writing $\delta = (\delta_1 , \ldots , \delta_M)$, and recalling (\ref{E:B_rho_sharp_equations}), the remaining requirements are that there exists $\delta \in [-1,1]^M$ such that
\begin{align*}
\delta \cdot (\rho^{-1} , \ldots , \rho^{-M}) &= 0  \\
\delta \cdot (0 , 0 , \ldots , \rho^{-r} , \ldots , \rho^{-M}) &\ge 0 \;\;\; \forall r = 1 , \ldots , M \\
\end{align*}
This is nothing more than the statement that, in the system of inequalities
$$
x \cdot (0 , 0 , \ldots , \rho^{-r} , \ldots , \rho^{-M}) \ge 0 \;\;\; : \; r = 1 , \ldots , M
$$
the inequality corresponding to $r=1$ does bind at some point.  This is true because the vectors
$$
(0 , 0 , \ldots , \rho^{-r} , \ldots , \rho^{-M}) \ge 0 \;\;\; : \; r = 1 , \ldots , M
$$
are linearly independant.  For a specific example, we could choose 
\begin{align*}
\delta_j &= c (-1)^{M-j} \rho^j  \qquad &\text{[$M$ even]}  \\
\delta_1=0 , \;\;\;\; \delta_j &= c (-1)^{M-j} \rho^j  \;\; j > 1 \qquad &\text{[$M$ odd]}  \\
\end{align*}

\subsection*{Proposition \ref{P:sharpness} part $(II)$}

Fix $M \ge 2$ and $0 < \rho' < \rho$.  Here, we want to find $\{a_n\}$ and $\gamma>0$ which satisfy
\begin{itemize}
\item{$\sum a_n n^{-s}$ has $\sigma_a=0$}
\item{ $( |a_{Ml+1}|,   |a_{Ml+2}|, \ldots,  |a_{Ml+M}| )  \in B^{\rho} $ }
\item{ $( \cos(\ttt_{Ml+1}), \cos(\ttt_{Ml+2}), \ldots, \cos(\ttt_{Ml+M}) \; )  \in -\left( B^{\rho'} \right)^{\sharp} + \gamma (1,1, \ldots , 1)$ }
\item{ $\sum a_n n^{-s}$ has a holomorphic extension past $s=0$}
\end{itemize}
We choose $\{a_n\}$ as in (\ref{E:counterexamples}).  In view of the discussion above, it only remains to prove that we can choose $\lambda, \delta_j \in [-1,1], \gamma>0$ such that the following three properties hold:
\begin{align*}
\lambda \sum_{j=1}^M \rho^{-j} \delta_j + \gamma \left( \sum_{j=1}^M \rho^{-j}  \right) &= 0   \\
\lambda ( \delta_1 , \ldots , \delta_M ) &\in - (B^{\rho'})^{\sharp} \\
\lambda \delta_j + \gamma &\in [-1,1]  
\end{align*}
Evidently, if we find $\delta \in - (B^{\rho'})^{\sharp}$ such that 
$$
\sum_{j=1}^M \rho^{-j} \delta_j < 0
$$
then we can find arbitrarily small values of $\lambda, \gamma$ such that the first property is satisfied, and therefore we can simultaneously satisfy the third property as well.  Therefore, it suffices to find $\delta$ such that
\begin{align}
\delta \in - (B^{\rho'})^{\sharp}   \nonumber \\
\delta \cdot (\rho^{-1} , \ldots , \rho^{-M}) &< 0  \label{E:counterexample_delta}
\end{align}
Intuitively, this is stating a certain ``properness'' of the containment relations
$$
\rho_1 < \rho_2 \;\; \implies \;\;  B^{\rho_1} \subset B^{\rho_2} \;\; \implies \;\; (B^{\rho_1})^{\sharp} \supset (B^{\rho_2})^{\sharp}
$$
The following example suffices: Define 
$$
x = ( - \rho^{-(M-1)} , 0 , 0 , \ldots , 0 , 1)
$$
We have
$$
x \cdot ( \; (\rho')^{-1} , \ldots , (\rho')^{-M}) = (\rho')^{-M} - (\rho')^{-1} \rho^{-(M-1)}
$$
which is greater than $0$, and clearly
$$
x \cdot ( 0 , \ldots , 0 ,\; (\rho')^{-r} , \ldots , (\rho')^{-M}) >0 \qquad \forall r>1
$$
so we have
$$
\sum_{j=r}^M x_j  (\rho')^{-j} > 0  \qquad \forall r \ge 1
$$
Therefore, there exists $\eee$ such that for any $y$, $|y-x|<\eee$, we have
$$
\sum_{j=r}^M y_j  (\rho')^{-j} > 0  \qquad \forall r \ge 1
$$
which implies $y \in -(B^{\rho'})^{\sharp}$.  We selected $x$ to satisfy
$$
x \cdot (\rho^{-1} , \ldots , \rho^{-M}) = 0
$$
This means (since a non-zero linear functional on $\mathbb{R}^M$ is an open mapping) we have $\delta$, $|\delta - x|<\eee$ such that
$$
\delta \cdot (\rho^{-1} , \ldots , \rho^{-M}) < 0
$$
and thus $\delta$ satisfies (\ref{E:counterexample_delta}), concluding the proof.

\section*{Acknowledgements}

The author would like to thank the MIT Open Courseware Project; this article was conceived while perusing the Analytic Number Theory lecture notes of Prof. Kiran Kedlaya.

\end{document}